\numberwithin{equation}{section}
\newcommand{\MR}{\textit{MR}}
\newcommand{\1}{\mathds{1}}
\newcommand{\K}{\mathds{K}}
\newcommand{\R}{\mathds{R}}
\newcommand{\C}{\mathds{C}}
\newcommand{\A}{\mathcal{A}}
\newcommand{\B}{\mathcal{B}}
\newcommand{\D}{\mathcal{D}}
\newcommand{\V}{\mathcal{V}}
\renewcommand{\L}{\mathcal{L}}
\renewcommand{\d}{\mathrm{d}}
\renewcommand{\Re}{\operatorname{Re}}
\renewcommand{\Im}{\operatorname{Im}}
\newcommand{\fra}{\mathfrak{a}}
\newcommand{\frb}{\mathfrak{b}}
\newcommand{\frc}{\mathfrak{c}}
\renewcommand{\mid}{\, \vert \,}
\DeclarePairedDelimiter\abs{\lvert}{\rvert}
 \DeclarePairedDelimiter\norm{\lVert}{\rVert} 
\theoremstyle{plain}
\newtheorem{theorem}{Theorem}[section]
\newtheorem{proposition}[theorem]{Proposition}
\newtheorem{lemma}[theorem]{Lemma}
\theoremstyle{definition}
\theoremstyle{remark}
\newtheorem{remark}[theorem]{Remark}
\begin{document}
\title{Maximal Regularity for Non-Autonomous  Second Order Cauchy Problems}
\author{
    Dominik Dier,
    El Maati Ouhabaz\footnote{Corresponding author.}
}



\maketitle

\begin{abstract}\label{abstract}
We consider  non-autonomous wave equations 
\[		\left\{
		\begin{aligned}
			&\ddot u(t) + \B(t)\dot u(t) + \A(t)u(t) = f(t) \quad t\text{-a.e.}\\
			&u(0)=u_0,\,  \dot u(0) = u_1.
		\end{aligned}
		\right.
	\]
where the operators  $\A(t)$ and $\B(t)$ are  associated with time-dependent sesquilinear forms  $\fra(t,.,.)$  and $\frb$ defined on a Hilbert space $H$ with the same domain $V$. The initial values  satisfy $ u_0 \in V$ and $u_1 \in H$. 
We  prove well-posedness and  maximal regularity for the solution both in the spaces $V'$ and $H$.  
We apply the results to non-autonomous Robin-boundary conditions and also use maximal regularity to solve a quasilinear problem.
\end{abstract}

\bigskip
\noindent  
{\bf Key words:} Sesquilinear forms, non-autonomous evolution equations, maximal regularity, non-linear heat equations, wave equation.\medskip

\noindent
\textbf{MSC:} 35K90, 35K45, 35K92, 47F05.

\section{Introduction}\label{section:introduction}

The present paper is a continuation of \cite{ADLO} which is devoted to  maximal regularity 
for first order  non-autonomous evolution equations governed by forms.  Here we address the problem of maximal regularity
for non-autonomous  second order problems. \\
We consider Hilbert spaces $H$ and $V$ such that $V$ is continuously embedded into $H$ and two families of sesquilinear  forms
\[
	\fra\colon [0,T]\times V\times V \to \C, \quad \frb\colon [0,T]\times V\times V \to \C
\]
such that 
$\fra(.,u,v)\colon [0,T]\to\C$,  $\frb(.,u,v)\colon [0,T]\to\C$ are measurable for all $u,v \in V$, 
\[
	\abs{ \fra(t,u,v) } \le M \norm{u}_V \norm{v}_V \quad (t\in[0,T]), 
\] 
 and  
\[
	\Re \fra (t,u,u) + w \norm{u}_H^2  \ge \alpha \|u\|^2_V \quad (u\in V, t\in [0,T]) 
\] 
where $M \ge 0$, $w \in \R$, and $\alpha > 0$ are constants.  We assume also that $\frb$ satisfies the same properties. 
For fixed $t\in [0,T]$, we denote by  $\A(t), \B(t) \in \L(V,V^\prime)$ the operators associated with the forms
$\fra(t, .,.)$ and $\frb(t,.,.)$, respectively. Given a function $f$ defined on $ [0,T]$ with values either in $H$ or in $V'$ and consider 
the second order evolution equation
\begin{equation}\label{prob1}
		\left\{
		\begin{aligned}
			&\ddot u(t) + \B(t)\dot u(t) + \A(t)u(t) = f(t) \quad t\text{-a.e.}\\
			&u(0)=u_0,\,  \dot u(0) = u_1.
		\end{aligned}
		\right.
	\end{equation}
with initial values $u_0 \in V$ and $u_1 \in H$.  This is a damped non-autonomous wave equation.  The equation without the factor $\dot u$, i.e., 
\begin{equation}\label{prob2}
		\left\{
		\begin{aligned}
			&\ddot u(t)  + \A(t)u(t) = f(t) \quad t\text{-a.e.}\\
			&u(0)=u_0,\,  \dot u(0) = u_1.
		\end{aligned}
		\right.
	\end{equation}
is a non-autonomous wave equation. 

Our aim is to prove well-posedness and maximal regularity for \eqref{prob1} and \eqref{prob2}. We shall prove three main results. The first one concerns maximal regularity in $V'$ for the damped wave equation \eqref{prob1}. We prove that for $u_0 \in V, u_1 \in H$ and $f \in L^2(0,T, V')$ there exists a unique solution 
$u \in H^1(0,T, V) \cap H^2(0,T, V')$. This result  was first proved by Lions \cite[p.\ 151]{Lio61}  by assuming  regularity of  $t \mapsto \fra(t,u,v)$ and $t \mapsto \frb(t,u,v)$ for every fixed $u, v \in V$.  This regularity assumption was removed  in Dautray-Lions \cite[p.\ 667]{DL88}, but taking $f \in L^2(0, T, H)$ and considering mainly symmetric forms. The general case was  given recently by Batty, Chill and Srivastava  \cite{BCS} by  reducing the problem to a first order non-autonomous equation. The result in  \cite{BCS} is stated  in the case $u_0 = u_1 = 0$, only. 
Our proof is different from \cite{BCS} and is inspired by that of   
 Lions \cite{Lio61}. Next we consider maximal regularity  in $H$. This is more delicate and needs extra properties on the forms $\fra$ and $\frb$. We prove that if the forms are symmetric and $t \mapsto \fra(t,u,v)$ and $t \mapsto \frb(t,u,v)$ are piecewise Lipschitz on $[0,T]$ then for $u_0 \in V$, $u_1 \in H$ and $f \in L^2(0,T,H)$ there exists a unique solution $u \in H^1(0,T,V) \cap H^2(0,T,H)$ to  the equation \eqref{prob1}. We also allow some non-symmetric perturbations
 of $\fra$ and $\frb$. The  third result  (Theorem \ref{thm:MR_in_V'}) concerns the wave equation \eqref{prob2}.  We prove that if $\fra$ is symmetric 
and $t \mapsto \fra(t,u,v)$ is Lipschitz on $[0,T]$, 
then for every $u_0 \in V$, $u_1 \in H$ and $f \in L^2(0,T,H)$ there exists a unique solution $u \in L^2(0,T,V) \cap H^1(0,T,H) \cap H^2(0,T,V')$ to the equation \eqref{prob2}. This result is not new and was already proved by Lions \cite[p.\ 150]{Lio61} for the  case $u_0 = 0$ and later  in 
\cite[p.\ 666]{DL88}  for $u_0 \in V$ and $u_1 \in H$.    Theorem \ref{thm:MR_in_V'}  is stated in order to have a complete  picture of maximal regularity for wave equations with or  without damping.  The proof in  \cite{DL88} uses a Galerkin method and sectorial approximation.  The proofs of  the three main theorems   use a representation result  of Lions (see  Theorem \ref{thlions} below) for a given   sesquilinear form $E$  acting on a product of a Hilbert  and pre-Hilbert  spaces $\mathcal H \times \V$.   In each case we have to define the appropriate  spaces $\mathcal H$, $\V$ and the form $E$ to which we apply Theorem 
\ref{thlions}. This idea was already used in \cite{Lio61}. Our choice of the spaces $\mathcal H$, $\V$ and  the form $E$ allow us to sharpen and extend some results from \cite{Lio61} and assume less regularity on $t \mapsto \fra(t,u,v)$ and $t \mapsto \frb(t,u,v)$.

We illustrate our abstract results by two examples. The first one  is a linear damped wave  equation with time dependent Robin boundary conditions. 
The second  is a quasi-linear second order non-autonomous problem.  
The latter is treated by a fixed point argument  but the  implementation of this classical idea 
uses heavily a priori estimates that follow from our  maximal regularity results for linear equations.

\subsection*{Acknowledgment}
Some ideas in this work germinated during a visit  of the second named author at  the University of Ulm in the framework of the Graduate School: 
Mathematical Analysis of Evolution, Information and Complexity financed by the Land Baden-Württemberg 
and during the visit of the first named author at the University of Bordeaux. Both authors thank Wolfgang Arendt for fruitful discussions on the non-autonomous maximal regularity. \\
D.\ Dier is a member of the DFG Graduate School 1100: Modeling,  Analysis and Simulation in Economics.\\
The research of E.\ M.\ Ouhabaz  is partly supported by the ANR project ``Harmonic Analysis at its Boundaries'',  ANR-12-BS01-0013-02.

\section{Preliminaries}
Throughout  this paper, $V$ and $H$ are separable Hilbert spaces over the field  $\K = \C$ or $\R$.
 The   scalar  products of $H$ and $V$  and the corresponding norms will be denoted by  $(. \mid .)_H$, $(. \mid .)_V$, $\norm{.}_H$ and $\norm{.}_V$, respectively. We denote by  $V'$   the antidual of $V$ when  $\K =\C$ and the dual when $\K =\R$. 
 The duality between $V'$ and $V$ is denoted by $\langle ., . \rangle$. Then $\langle u, v \rangle = (u \mid v)_H$ for $u \in H$ and $v \in V$.\\
  We assume that
\[
    V \underset d \hookrightarrow H;
\]
i.e., $V$ is a dense subspace of $H$ such that for some constant
$c_H >0$,
\begin{equation}\label{eq:V_dense_in_H}
    \norm{u}_H \le c_H \norm u _V \quad (u \in V).
\end{equation}
By duality and density of $V$ in $H$ one has
\[
    H \underset d \hookrightarrow V'.
\]
The space $H$ is then  identified with a dense subspace of $V'$ (associating to $u \in H$ the antilinear map  $v \mapsto (u \mid v)_H = \langle u, v \rangle$ 
for $v \in V$).

 Let
\[
    \fra \colon [0, T] \times V \times V \to \K
\]
be a family of sesquilinear and $V$-bounded forms; i.e.\
\begin{equation}\label{eq:a_continuous}
    \abs{\fra(t, u,v)} \le M \norm u _V \norm v _V \quad (u,v \in V, t \in [0,T])
\end{equation}
for some constant $M$, such that $\fra(.,u,v)$ is measurable for all $u,v \in V$.  
We shall call $\fra$ satisfying the above properties 
a  \emph{$V$-bounded non-autonomous sesquilinear form}.
Moreover we say that  $\fra$ is \emph{quasi-coercive} if 
there exist constants $\alpha >0$, $\omega \in \R$ such that
\begin{equation}\label{eq:H-elliptic}
    \Re \fra(t, u,u) + \omega \norm u_H^2 \ge \alpha \norm u _V^2 \quad (u \in V, t \in [0,T]).
\end{equation}
If $\omega=0$, we say that  the form $\fra$ is  \emph{coercive}. 

For $t \in [0,T]$, a $V$-bounded and quasi-coercive sesquilinear form $\fra(t,.,.)$ is closed.  
The operator   $\A(t)  \in \L(V,V')$  associated with $\fra(t, .,.)$ is defined by
\begin{equation}\label{op1}
\langle \A(t)  u, v \rangle = \fra(t, u,v) \quad {\rm for} \ u,v \in V. 
\end{equation}
We may also associate with $\fra(t,.,.)$ an operator on $H$ by taking the part $A(t)$ 
of $\A(t)$ on $H$; i.e.,\
\begin{align*}
    D(A(t)) := {}& \{ u\in V : \A(t)  u \in H \}\\
    A(t) u := {}& \A(t) u.
\end{align*}
Note that if $\fra(t,.,.)$ is \emph{symmetric}, i.e., 
\[
    \fra(t, u,v)=\overline{\fra(t, v,u)}
\]
for all $u,v \in V$, then  the operator $A(t)$ is self-adjoint.

For a Hilbert space $E$ we denote by $L^2(0,T,E)$ the $L^2$-space on $(0,T)$ of functions with values in $E$ and by $H^k(0,T,E)$ we denote the usual Sobolev space of order $k$ of functions on $(0,T)$ with values in $E$. For $u \in H^1(0,T;E)$ we denote the first derivative by $\dot u$ and for $u \in H^2(0,T;E)$ the second derivative by $\ddot u$.

We start with the  following differentiation result. 
\begin{lemma}\label{lem:differentiation}
Let 
\[
    \fra \colon [0, T] \times V \times V \to \K
\]
be  a $V$-bounded, quasi-coercive non-autonomous form. Suppose that it is Lipschitz  with Lipschitz constant $\dot M$, that is
$$| \fra(t, \phi, \psi) - \fra(s, \phi, \psi) | \le \dot M | t - s |\norm{\phi}_{V} \norm{\psi}_{V}, \ t, s \in [0,T] \ {\rm and } \  \phi, \psi \in V.$$
Let  $u,v  \in H^1(0,T;V)$.
Then $\fra(.,u,v) \in W^{1,1}(0,T)$ and there exists a non-autonomous form $\dot\fra$ which is $V$-bounded with constant $\dot M$ such that
\[
	\fra(.,u,v)\dot{} = \fra(.,u,\dot v) + \fra(.,\dot u,v)+ \dot\fra(.,u,v)
\]
If additionally $\fra$ is symmetric then
\[
	\fra(.,u,u)\dot{} = 2 \Re\fra(.,u,\dot u) + \dot\fra(.,u,u).
\]
\end{lemma}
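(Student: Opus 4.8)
The plan is to establish first that $\fra(\cdot,u,v)$ is absolutely continuous (hence in $W^{1,1}(0,T)$) by a direct estimate on difference quotients, and then identify the derivative by splitting each increment into three pieces: one moving the time slot only, and two moving the $V$-valued arguments. Concretely, for $s < t$ I would write
\[
\fra(t,u(t),v(t)) - \fra(s,u(s),v(s)) = \bigl[\fra(t,u(t),v(t)) - \fra(s,u(t),v(t))\bigr] + \fra(s,u(t),v(t)) - \fra(s,u(s),v(s)),
\]
and further decompose the last difference as $\fra(s,u(t)-u(s),v(t)) + \fra(s,u(s),v(t)-v(s))$, using bilinearity. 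The first bracket is controlled by the Lipschitz hypothesis, giving a bound $\dot M |t-s|\,\norm{u(t)}_V\norm{v(t)}_V$; the other two are controlled by $V$-boundedness of $\fra(s,\cdot,\cdot)$ with constant $M$ together with $u,v \in H^1(0,T;V) \subset C([0,T];V)$. Since $u,v$ are absolutely continuous into $V$ and uniformly bounded on $[0,T]$, summing the estimates over a partition shows $\fra(\cdot,u,v)$ is absolutely continuous, so it lies in $W^{1,1}(0,T)$ and is differentiable a.e.

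Next I would compute the a.e.\ derivative. At a point $t$ where $u$ and $v$ are differentiable in $V$, divide the three-term decomposition above by $t-s$ and let $s \to t$. The second and third terms converge to $\fra(t,\dot u(t),v(t))$ and $\fra(t,u(t),\dot v(t))$ respectively — here one uses $V$-boundedness to pass to the limit, replacing $\fra(s,\cdot,\cdot)$ by $\fra(t,\cdot,\cdot)$ up to an error that vanishes because the difference quotients of $u,v$ are bounded in $V$ and $\fra(s,\phi,\psi)\to\fra(t,\phi,\psi)$ by Lipschitz continuity. The first term, $\frac{1}{t-s}\bigl[\fra(t,u(t),v(t)) - \fra(s,u(t),v(t))\bigr]$, is a difference quotient of the scalar function $r \mapsto \fra(r, u(t), v(t))$, which is Lipschitz with constant $\dot M\,\norm{u(t)}_V\norm{v(t)}_V$; this is where I define $\dot\fra$.

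To define $\dot\fra$ properly I would fix $\phi,\psi \in V$ and consider $g_{\phi,\psi}(r) := \fra(r,\phi,\psi)$, which is Lipschitz on $[0,T]$ with constant $\dot M\norm{\phi}_V\norm{\psi}_V$, hence differentiable a.e.; set $\dot\fra(t,\phi,\psi) := \dot g_{\phi,\psi}(t)$ at points of differentiability (and $0$ elsewhere). Sesquilinearity of $\dot\fra(t,\cdot,\cdot)$ follows from that of $\fra(t,\cdot,\cdot)$ by taking limits of difference quotients, and the bound $|\dot\fra(t,\phi,\psi)| \le \dot M\norm{\phi}_V\norm{\psi}_V$ is immediate from the Lipschitz constant. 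Measurability of $\dot\fra(\cdot,\phi,\psi)$ holds since it is an a.e.\ limit of measurable difference quotients. The one subtlety is that the null set where $g_{\phi,\psi}$ fails to be differentiable depends on $\phi,\psi$; I would handle this by noting that in the product rule identity the offending term is a genuine a.e.\ derivative of the $W^{1,1}$ function $\fra(\cdot,u,v)$ minus two terms that are defined a.e., so the identity holds a.e.\ regardless, and by separability of $V$ one can, if desired, choose a single conull set on a countable dense set and extend by the uniform bound.

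Finally, the symmetric case is an immediate specialization: if $\fra(t,\cdot,\cdot)$ is symmetric for each $t$ then $\dot\fra(t,\cdot,\cdot)$ is symmetric (limit of symmetric forms), so $\fra(\cdot,u,u)\dot{} = \fra(\cdot,\dot u,u) + \fra(\cdot,u,\dot u) + \dot\fra(\cdot,u,u) = 2\Re\fra(\cdot,u,\dot u) + \dot\fra(\cdot,u,u)$, using $\fra(t,u,\dot u) + \fra(t,\dot u,u) = \fra(t,u,\dot u) + \overline{\fra(t,u,\dot u)} = 2\Re\fra(t,u,\dot u)$. I expect the main obstacle to be the clean definition of $\dot\fra$ and the bookkeeping of null sets — i.e.\ ensuring $\dot\fra$ is a bona fide non-autonomous $V$-bounded form (measurable in $t$, sesquilinear, bounded by $\dot M$) rather than just a pointwise-a.e.\ gadget — together with justifying the interchange of limit and form evaluation when passing $s \to t$ in the two "moving-argument" terms.
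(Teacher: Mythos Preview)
Your argument is correct and is essentially the direct, form-level version of the paper's proof. The paper instead passes to the associated operator $\A(t) \in \L(V,V')$ via $\fra(t,u,v) = \langle \A(t)u, v \rangle$ and combines two auxiliary results: a product rule for the duality pairing on $H^1(0,T;V') \times H^1(0,T;V)$, and a proposition stating that any Lipschitz map $S\colon [0,T] \to \L(V,V')$ admits a bounded, strongly measurable derivative $\dot S$ satisfying the chain rule $(S(.)u(.))\dot{} = \dot S(.)u(.) + S(.)\dot u(.)$ for $u \in H^1(0,T;V)$. This operator-level packaging is precisely the clean resolution of the null-set bookkeeping you flagged as the main obstacle: one differentiates $t \mapsto S(t)\phi$ for $\phi$ in a countable dense subset of the separable space $V$, obtains a single conull set, and extends $\dot S(t)$ to all of $V$ by the uniform Lipschitz bound, so that $\dot\fra(t,\phi,\psi) := \langle \dot S(t)\phi, \psi\rangle$ is automatically a genuine $V$-bounded sesquilinear form with constant $\dot M$. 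Your three-term splitting and difference-quotient limits amount to the same computation carried out by hand at the level of the form.
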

Note that for $u,v \in V$ we have $\frac{\d}{\d t} \fra(t,u,v) = \dot \fra(t,u,v)$ for a.e.\ $t \in [0,T]$.

This lemma is a consequence of the  next two results.
 \begin{lemma}\label{lemma:int_by_part0}
	Let $u \in H^1(0,T;V)$ and $v \in H^1(0,T;V')$. 
	Then $\langle v(.), u(.) \rangle \in W^{1,1}(0,T)$ and
	\[
			\langle v(.), u(.) \rangle \dot{} = \langle \dot v(.), u(.) \rangle + \langle v(.), \dot u(.) \rangle.
	\]
\end{lemma}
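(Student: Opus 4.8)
The plan is to establish Lemma~\ref{lemma:int_by_part0} by the standard density/approximation route, reducing the vector-valued statement to the classical one-dimensional product rule. First I would treat the case of smooth functions: if $u \in C^1([0,T];V)$ and $v \in C^1([0,T];V')$, then the scalar function $g(t) := \langle v(t), u(t)\rangle$ is classically differentiable, and since the duality pairing $\langle\cdot,\cdot\rangle\colon V'\times V\to\K$ is bilinear and bounded (hence jointly continuous), the ordinary product rule gives $g'(t) = \langle \dot v(t), u(t)\rangle + \langle v(t), \dot u(t)\rangle$ pointwise. The right-hand side is continuous, so $g \in C^1([0,T]) \subset W^{1,1}(0,T)$ and the identity holds in the required sense.

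Next I would pass to the general case by approximation. Given $u \in H^1(0,T;V)$ and $v \in H^1(0,T;V')$, choose sequences $u_n \to u$ in $H^1(0,T;V)$ and $v_n \to v$ in $H^1(0,T;V')$ with $u_n \in C^1([0,T];V)$, $v_n \in C^1([0,T];V')$ (smooth functions are dense in these Sobolev spaces). By the Sobolev embedding $H^1(0,T;E) \hookrightarrow C([0,T];E)$, we also have $u_n \to u$ and $v_n \to v$ uniformly in $C([0,T];V)$ and $C([0,T];V')$ respectively. From the boundedness of the pairing, $\langle v_n(\cdot), u_n(\cdot)\rangle \to \langle v(\cdot), u(\cdot)\rangle$ uniformly, and the derivative expressions $\langle \dot v_n, u_n\rangle + \langle v_n, \dot u_n\rangle$ converge to $\langle \dot v, u\rangle + \langle v, \dot u\rangle$ in $L^1(0,T)$: indeed $\langle \dot v_n, u_n\rangle \to \langle \dot v, u\rangle$ in $L^1$ because $\dot v_n \to \dot v$ in $L^2(0,T;V')$ and $u_n \to u$ uniformly (so the product converges in $L^1$ by Cauchy--Schwarz and the bilinear estimate), and symmetrically for the other term. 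Since $W^{1,1}(0,T)$ is closed under the operation ``uniform convergence of functions together with $L^1$-convergence of distributional derivatives,'' the limit $\langle v(\cdot), u(\cdot)\rangle$ lies in $W^{1,1}(0,T)$ and its derivative is the claimed expression.

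I expect the only mild subtlety to be the convergence of the cross terms $\langle \dot v_n(\cdot), u_n(\cdot)\rangle$: here one term is merely in $L^2$ and the other is controlled uniformly, so one must be slightly careful to pair the $L^2$-convergent factor with the uniformly-convergent factor rather than the reverse; writing $\langle \dot v_n, u_n\rangle - \langle \dot v, u\rangle = \langle \dot v_n - \dot v, u_n\rangle + \langle \dot v, u_n - u\rangle$ and bounding the first term by $\|\dot v_n - \dot v\|_{L^2(0,T;V')}\|u_n\|_{L^\infty(0,T;V)}$ (Cauchy--Schwarz in $t$) and the second by $\|\dot v\|_{L^2(0,T;V')}\|u_n - u\|_{L^\infty(0,T;V)}$ settles it. Everything else is routine bookkeeping, and no step presents a real obstacle.
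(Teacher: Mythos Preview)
Your proof is correct, but the paper takes a different and more direct route: instead of approximating by smooth functions, it works directly with the given $u$ and $v$ via Fubini's theorem. Writing $u(s) = u(0) + \int_0^s \dot u(r)\,\d r$ and substituting into $\int_0^t \langle \dot v(s), u(s)\rangle\,\d s$, one swaps the order of integration in the resulting double integral and after a short rearrangement obtains
\[
\langle v(t), u(t)\rangle = \langle v(0), u(0)\rangle + \int_0^t \langle \dot v(s), u(s)\rangle\,\d s + \int_0^t \langle v(s), \dot u(s)\rangle\,\d s,
\]
which is exactly the $W^{1,1}$ statement. Your density argument is the standard textbook approach and has the virtue of being completely routine once one accepts that $C^1([0,T];E)$ is dense in $H^1(0,T;E)$; the paper's Fubini computation is more self-contained (no approximation, no appeal to density results for vector-valued Sobolev spaces) and gets the integrated identity in one stroke, at the cost of a slightly less transparent calculation.
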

\begin{proof}
	By Fubini's Theorem we have
	\begin{align*}
		\int_0^t \langle \dot v(s), u(s)\rangle \ \d s
			&= \int_0^t \Big\langle \dot v(s), u(0) + \int_0^s \dot u(r) \ \d r \Big\rangle \ \d s\\
			&= \langle v(t), u(0) \rangle - \langle v(0), u(0) \rangle + \int_0^t \int_0^s \langle \dot v(s), \dot u(r) \rangle \ \d r \ \d s\\
			&= \langle v(t), u(0) \rangle - \langle v(0), u(0) \rangle + \int_0^t \int_r^t \langle \dot v(s), \dot u(r) \rangle \ \d s \ \d r\\
			&= \langle v(t), u(0) \rangle - \langle v(0), u(0) \rangle + 
					\int_0^t \langle v(t), \dot u(r) \rangle - \langle v(r), \dot u(r) \rangle \ \d r\\
			&= \langle v(t), u(t) \rangle - \langle v(0), u(0) \rangle - 
					\int_0^t \langle v(r), \dot u(r) \rangle \ \d r.
	\end{align*}
	Thus
	\[
			\langle v(t), u(t) \rangle = \langle v(0), u(0) \rangle + \int_0^t \langle \dot v(s), u(s)\rangle \ \d s+ \int_0^t \langle v(s), \dot u(s)\rangle \ \d s
	\]
	which proves the claim.
\end{proof}
\begin{proposition}\label{prop:lipschitz_continuous_operators}
  Let  $S\colon [0,T]\to \L(V,V')$ be Lipschitz continuous.
    Then the following  assertions hold.
    \begin{enumerate}[label={\rm \alph*)}]
        \item There exists a bounded, strongly measurable function
            $\dot S \colon [0,T] \to \L(V,V')$ such that
            \[
                \frac \d {\d t} S(t)u = \dot S(t)u \quad (u \in V)
            \]
            for a.e.\ $t\in [0,T]$ and
            \[
                \norm{\dot S(t)}_{\L(V,V')} \le L \quad (t \in [0,T])
            \]
            where $L$ is the Lipschitz constant of $S$.
        \item If $ u\in H^1(0,T;V)$, then
            $Su := S(.)u(.) \in H^1(0,T;V')$ and
            \begin{equation}\label{eq:chain_rule}
                (Su)\dot{} = \dot S(.)u(.)+ S(.) \dot u(.).
            \end{equation}
    \end{enumerate}
\end{proposition}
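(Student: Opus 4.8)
The plan is to prove part (a) first, since part (b) follows by a density/approximation argument. For part (a), I would fix $u \in V$ and consider the map $t \mapsto S(t)u \in V'$. Since $S$ is Lipschitz continuous into $\L(V,V')$, the map $t \mapsto S(t)u$ is Lipschitz continuous into $V'$, hence (as $V'$ is a Banach space and Lipschitz functions into a Banach space with the Radon–Nikodym property are differentiable a.e.; alternatively one works directly with difference quotients) it is differentiable for a.e.\ $t$, with derivative bounded in $V'$-norm by $L\norm{u}_V$. The main subtlety is that the exceptional null set may depend on $u$, and we want a single strongly measurable operator-valued $\dot S$. To handle this I would first use separability of $V$: pick a countable dense set $D \subset V$, take the common null set $N$ outside of which $\frac{\d}{\d t}S(t)u$ exists for all $u \in D$, and then for $t \notin N$ define $\dot S(t)$ on $D$ by this derivative. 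The estimate $\norm{S(t+h)u - S(t)u}_{V'} \le L|h|\norm{u}_V$ passes to the difference quotients, so $u \mapsto \frac1h(S(t+h)u - S(t)u)$ is uniformly (in $h$) bounded by $L$ in $\L(V,V')$; hence the pointwise limit on the dense set $D$ extends uniquely to an operator $\dot S(t) \in \L(V,V')$ with $\norm{\dot S(t)}_{\L(V,V')} \le L$. A further $\varepsilon/3$-type argument shows that in fact $\frac{\d}{\d t}S(t)u = \dot S(t)u$ for \emph{every} $u \in V$ (not just $u \in D$) for a.e.\ $t$. Strong measurability of $\dot S$ follows because for each fixed $u$, $t \mapsto \dot S(t)u$ is an a.e.\ pointwise limit of the measurable difference quotients $\frac1h(S(t+h)u - S(t)u)$.

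For part (b), let $u \in H^1(0,T;V)$. The idea is to establish \eqref{eq:chain_rule} first for a dense class of $u$ and then pass to the limit. If $u$ is, say, a $V$-valued step function or a finitely-valued simple function times smooth scalar coefficients — more concretely, $u(t) = \sum_j \varphi_j(t) x_j$ with $\varphi_j \in C^1[0,T]$ and $x_j \in V$ — then $S(.)u(.) = \sum_j \varphi_j(.)S(.)x_j$, and since $t \mapsto S(t)x_j$ is Lipschitz (hence in $W^{1,\infty}(0,T;V') \subset H^1(0,T;V')$) with derivative $\dot S(.)x_j$ by part (a), the ordinary product rule for $W^{1,1}$ functions gives $(Su)\dot{} = \sum_j \dot\varphi_j S(.)x_j + \sum_j \varphi_j \dot S(.)x_j = S(.)\dot u(.) + \dot S(.)u(.)$. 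For general $u \in H^1(0,T;V)$, approximate by such $u_n \to u$ in $H^1(0,T;V)$. Then $S(.)u_n(.) \to S(.)u(.)$ in $L^2(0,T;V')$ because $\norm{S(t)}_{\L(V,V')}$ is uniformly bounded, and similarly $S(.)\dot u_n(.) + \dot S(.)u_n(.) \to S(.)\dot u(.) + \dot S(.)u(.)$ in $L^1(0,T;V')$ using the uniform bounds $\norm{S(t)}, \norm{\dot S(t)} \le L'$; since the distributional derivative is continuous for these convergences, we conclude $S(.)u(.) \in H^1(0,T;V')$ with $(Su)\dot{} = S(.)\dot u(.) + \dot S(.)u(.)$.

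The main obstacle is the measurability/uniformity bookkeeping in part (a): combining pointwise a.e.\ differentiability (with $u$-dependent null sets) into a single strongly measurable $\L(V,V')$-valued function requires the separability of $V$ and a careful $\varepsilon/3$ argument to upgrade from the countable dense set to all of $V$. Everything after that — the product rule on simple functions and the approximation in $H^1(0,T;V)$ — is routine given the uniform operator bound on $S$ and $\dot S$.
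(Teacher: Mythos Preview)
The paper does not actually prove this proposition; it simply writes ``Proposition~\ref{prop:lipschitz_continuous_operators} is proved in \cite{ADLO}.'' So there is no in-paper argument to compare against. Your outline is correct and is essentially the standard proof: for (a) you use that $V'$ is a Hilbert space (hence has the Radon--Nikodym property), separability of $V$ to get a common null set over a countable dense $D\subset V$, uniform boundedness of the difference quotients to extend $\dot S(t)$ from $D$ to all of $V$ with norm $\le L$, and an $\varepsilon/3$ argument to show the derivative exists and equals $\dot S(t)u$ for every $u\in V$ off that single null set; for (b) the product rule on tensor-type functions $\sum_j \varphi_j x_j$ followed by approximation in $H^1(0,T;V)$, using the uniform bounds on $S$ and $\dot S$, is exactly the right closure argument. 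The only cosmetic point is that ``strongly measurable'' here means $t\mapsto \dot S(t)u$ is Bochner measurable for each $u\in V$, which is precisely what your pointwise-limit-of-difference-quotients argument gives.
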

Proposition \ref{prop:lipschitz_continuous_operators} is proved in \cite{ADLO}. Lemma \ref{lem:differentiation}  follows from \eqref{op1}, Lemma \ref{lemma:int_by_part0}
and Proposition \ref{prop:lipschitz_continuous_operators}. 

We shall need the following representation result  due to Lions. See  {\cite[p.\ 156]{Lio59}, \cite[p.\ 61]{Lio61}} or \cite{ADLO}.
\begin{theorem}[Lions' Representation Theorem]\label{thlions}
Let $\mathcal H$ be a Hilbert space, $\V$ a pre-Hilbert space
    such that $\V \hookrightarrow \mathcal H$.
    Let $E\colon \mathcal H \times \V \to \K$
    be sesquilinear such that
    \begin{enumerate}
        \item[{\rm 1)}] for all $w\in \V$, $E(., w)$  is a continuous linear functional on $\mathcal H$;
        \item[{\rm 2)}] $\abs{E(w,w)} \ge \alpha \norm{w}_{\V}^2$ for all $w \in \V$
    \end{enumerate}
    for some $\alpha > 0$. Let $L \in \V'$.
    Then there exists $u\in \mathcal H$ such that
    \[
        L w = E(u,w)
    \]
    for all $w \in \V$.
\end{theorem}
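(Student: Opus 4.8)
\noindent\emph{Plan of proof.} The strategy is a two-step Riesz representation argument in $\mathcal H$, with the coercivity estimate 2) serving as the bridge between the two applications; this generalizes the Lax--Milgram lemma, with the two spaces now playing asymmetric roles. First, for each fixed $w \in \V$, hypothesis 1) tells us that $v \mapsto E(v,w)$ is a bounded linear functional on the Hilbert space $\mathcal H$, so by the Riesz representation theorem there is a unique vector $\Phi w \in \mathcal H$ with $E(v,w) = (v \mid \Phi w)_{\mathcal H}$ for all $v \in \mathcal H$. Matching this identity against the sesquilinearity of $E$ shows that $\Phi\colon \V \to \mathcal H$ is linear.

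Next I would extract a lower bound on $\Phi$ from hypothesis 2). Combining it with the Cauchy--Schwarz inequality and the embedding $\V \hookrightarrow \mathcal H$ (with constant $c$, say) gives
\[
	\alpha \norm{w}_{\V}^2 \le \abs{E(w,w)} = \abs{(w \mid \Phi w)_{\mathcal H}} \le \norm{w}_{\mathcal H}\,\norm{\Phi w}_{\mathcal H} \le c\,\norm{w}_{\V}\,\norm{\Phi w}_{\mathcal H},
\]
whence $\norm{\Phi w}_{\mathcal H} \ge (\alpha/c)\,\norm{w}_{\V}$ for all $w \in \V$; in particular $\Phi$ is injective. This injectivity lets me transport $L$ to the subspace $F := \Phi(\V) \subseteq \mathcal H$ by setting $\ell(\Phi w) := Lw$, which is a well-defined conjugate-linear functional on $F$, and the lower bound just obtained makes it bounded there, since $\abs{\ell(\Phi w)} = \abs{Lw} \le \norm{L}_{\V'}\norm{w}_{\V} \le (c/\alpha)\,\norm{L}_{\V'}\,\norm{\Phi w}_{\mathcal H}$. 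I then extend $\ell$ to a bounded functional $\tilde\ell$ on all of $\mathcal H$ --- either by continuity onto $\overline F$ followed by the Hahn--Banach theorem, or more simply by letting it vanish on $F^{\perp}$ --- and apply the Riesz theorem a second time to obtain $u \in \mathcal H$ with $(u \mid h)_{\mathcal H} = \tilde\ell(h)$ for all $h \in \mathcal H$. Taking $h = \Phi w$ and unwinding the first step yields $E(u,w) = (u \mid \Phi w)_{\mathcal H} = \ell(\Phi w) = Lw$ for every $w \in \V$, which is the claim.

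I do not anticipate a real obstacle in this argument; the two points that need care are the conjugate-linear bookkeeping (so that $\Phi$ is genuinely linear and the final identity reads $E(u,w)$ rather than $\overline{E(u,w)}$) and the conceptual observation that the incompleteness of $\V$ causes no trouble, because all of the work takes place inside the complete space $\mathcal H$ through the image $\Phi(\V)$, with the Hahn--Banach theorem (or the orthogonal projection onto $\overline F$) absorbing the gap between $\V$ and its completion.
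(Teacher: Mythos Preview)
Your argument is correct and is in fact the standard proof of Lions' representation theorem; the paper itself does not supply a proof but only cites the original sources \cite{Lio59,Lio61} and \cite{ADLO}. There is therefore nothing to compare against, and your write-up would serve perfectly well as a self-contained justification of Theorem~\ref{thlions}.
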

\section{Maximal Regularity for the Damped Wave Equation in $V'$}

Let $H, V$ be Hilbert spaces such that $V \stackrel d \hookrightarrow H$.
We define the following maximal regularity space
\begin{align*}
	\MR(V,V,V') :={}& L^2(0,T,V) \cap H^1(0,T;V) \cap H^2(0,T;V')\\
	 ={}& H^1(0,T;V) \cap H^2(0,T;V'). 
\end{align*}

Let $\fra \colon [0,T] \times V \times V \to \C$ and $\frb \colon [0,T] \times V \times V \to \C$ be  non-autonomous $V$-bounded and quasi-coercive sesquilinear forms. We denote by $\A(t)$ and $\B(t)$ their associated operators in the sense of \eqref{op1}. 
The following is our  first result.  
\begin{theorem}\label{thm:MR_in_V'_damped}
	For every $u_0 \in V$, $u_1 \in H$ and $f \in L^2(0,T;V')$,
	there exists a unique solution 
	$u \in \MR(V,V,V')$ of the 
	non-autonomous second order Cauchy problem
	\begin{equation}\label{eq:SO_CP_damped}
		\left\{
		\begin{aligned} 
			&\ddot u(t) + \B(t)\dot u(t) + \A(t)u(t) = f(t) \quad t\text{-a.e.}\\
			&u(0)=u_0,\,  \dot u(0) = u_1.
		\end{aligned}
		\right.
	\end{equation}
	Moreover there exists a constant  $C >0$ such that 
	\begin{equation}\label{eq:MR_estimate_damped}
		\norm{u}_{\MR(V,V,V')} \le C \Big[ \norm{u_0}_V + \norm{u_1}_H + \norm{f}_{L^2(0,T;V')} \Big].
	\end{equation}
\end{theorem}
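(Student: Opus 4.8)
The plan is to produce the solution through Lions' Representation Theorem (Theorem~\ref{thlions}), applied to a bilinear form obtained by pairing the equation with $e^{-\lambda t}\dot\phi$ for a sufficiently large parameter $\lambda>0$; the whole point is to choose $\mathcal H$, $\V$ and the form so that the coercivity hypothesis holds. First I would reduce to $u_0=0$: writing $u$ as the constant function $t\mapsto u_0$ plus a remainder, the remainder solves the same problem with zero initial position, initial velocity $u_1$, and right-hand side $f-\A(\cdot)u_0\in L^2(0,T;V')$ (the map $t\mapsto\A(t)u_0$ is bounded and weakly, hence strongly, measurable into $V'$). So assume $u_0=0$, and let $\alpha,M,\omega$ be constants good for both $\fra$ and $\frb$. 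Fix $\lambda>\max(2\omega,\,M^2T/\alpha^2)$ and set
\[
  \mathcal H:=\{u\in H^1(0,T;V):u(0)=0\},\qquad
  \V:=\{\phi\in H^1(0,T;V)\cap H^2(0,T;H):\phi(0)=0,\ \dot\phi(T)=0\},
\]
the first with the norm of $H^1(0,T;V)$, the second with $\|\phi\|_\V^2:=\|\dot\phi(0)\|_H^2+\int_0^Te^{-\lambda t}\|\dot\phi(t)\|_V^2\,\d t$. Since $\phi(0)=0$, Poincaré's inequality bounds the $H^1(0,T;V)$-norm of $\phi\in\V$ by a multiple of $\|\phi\|_\V$, so $\V\hookrightarrow\mathcal H$. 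Define $E\colon\mathcal H\times\V\to\C$ and $L\in\V'$ by
\[
  E(u,\phi):=\int_0^Te^{-\lambda t}\Big[-(\dot u\mid\ddot\phi)_H+\lambda(\dot u\mid\dot\phi)_H+\frb(t,\dot u,\dot\phi)+\fra(t,u,\dot\phi)\Big]\,\d t,\qquad
  L(\phi):=\int_0^Te^{-\lambda t}\langle f,\dot\phi\rangle\,\d t+(u_1\mid\dot\phi(0))_H,
\]
which is exactly the identity $E(u,\phi)=L(\phi)$ obtained by pairing \eqref{eq:SO_CP_damped} with $e^{-\lambda t}\dot\phi$ and integrating the $\ddot u$-term by parts, using $\dot\phi(T)=0$ and $\dot u(0)=u_1$.

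For fixed $\phi$, $u\mapsto E(u,\phi)$ is continuous on $\mathcal H$, since every term is dominated by $\|\dot u\|_{L^2(0,T;V)}+\|u\|_{L^2(0,T;V)}$, and $L\in\V'$ by Cauchy--Schwarz. The crux is condition~2) of Theorem~\ref{thlions}, namely $|E(\phi,\phi)|\ge c\|\phi\|_\V^2$. Taking real parts: the first two terms combine, via $\frac{\d}{\d t}\big(e^{-\lambda t}\|\dot\phi\|_H^2\big)$ and $\dot\phi(T)=0$, into $\tfrac12\|\dot\phi(0)\|_H^2+\tfrac\lambda2\int_0^Te^{-\lambda t}\|\dot\phi\|_H^2\,\d t$; quasi-coercivity of $\frb$ gives $\int_0^Te^{-\lambda t}\Re\frb(t,\dot\phi,\dot\phi)\,\d t\ge\alpha\int_0^Te^{-\lambda t}\|\dot\phi\|_V^2\,\d t-\omega\int_0^Te^{-\lambda t}\|\dot\phi\|_H^2\,\d t$, the last integral being absorbed by $\tfrac\lambda2\int_0^Te^{-\lambda t}\|\dot\phi\|_H^2\,\d t$ since $\lambda\ge2\omega$; and, because $\phi(0)=0$, Cauchy--Schwarz yields $\|\phi(t)\|_V\le\lambda^{-1/2}e^{\lambda t/2}\big(\int_0^Te^{-\lambda s}\|\dot\phi(s)\|_V^2\,\d s\big)^{1/2}$, whence $\big|\int_0^Te^{-\lambda t}\fra(t,\phi,\dot\phi)\,\d t\big|\le M\sqrt{T/\lambda}\,\int_0^Te^{-\lambda t}\|\dot\phi\|_V^2\,\d t$. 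As $\lambda>M^2T/\alpha^2$, this $\fra$-term is strictly dominated, so altogether $\Re E(\phi,\phi)\ge c\|\phi\|_\V^2$ with $c>0$, hence $|E(\phi,\phi)|\ge c\|\phi\|_\V^2$. Theorem~\ref{thlions} then produces $u\in\mathcal H$ with $E(u,\phi)=L(\phi)$ for all $\phi\in\V$. This coercivity step is the genuine obstacle: pairing with $\dot\phi$ (rather than $\phi$) is what makes the second-order term a positive boundary term and the damping term coercive, while the weight $e^{-\lambda t}$, together with Poincaré from $\phi(0)=0$, is what lets the merely measurable-in-$t$, possibly non-symmetric form $\fra$ be absorbed on an interval of arbitrary length.

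It remains to identify $u$ as the solution. Testing with $\phi(t)=\int_0^t\theta(s)\,\d s$ for $\theta\in C_c^\infty(0,T;V)$ and undoing the integration by parts, one gets $\int_0^T\langle\ddot u+\B(\cdot)\dot u+\A(\cdot)u-f,\,e^{-\lambda t}\theta\rangle\,\d t=0$ for all such $\theta$, hence $\ddot u+\B(\cdot)\dot u+\A(\cdot)u=f$ in $\D'(0,T;V')$; since the right-hand side lies in $L^2(0,T;V')$ (because $f\in L^2(0,T;V')$ and $u,\dot u\in L^2(0,T;V)$), this forces $\ddot u\in L^2(0,T;V')$, so $u\in\MR(V,V,V')$ and the equation holds $t$-a.e. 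Now $u(0)=0$ because $u\in\mathcal H$, and since $\dot u\in L^2(0,T;V)\cap H^1(0,T;V')\hookrightarrow C([0,T];H)$ the trace $\dot u(0)\in H$ is defined; integrating the $\ddot u$-term in $E(u,\phi)=L(\phi)$ by parts for general $\phi\in\V$ (now legitimate, as $u\in\MR(V,V,V')$) and comparing with $L$ gives $(\dot u(0)\mid\dot\phi(0))_H=(u_1\mid\dot\phi(0))_H$ for all $\phi\in\V$; since $\{\dot\phi(0):\phi\in\V\}$ contains all of $V$, which is dense in $H$, we conclude $\dot u(0)=u_1$.

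Finally, uniqueness and \eqref{eq:MR_estimate_damped} both follow from an energy estimate. Pairing the equation with $\dot u$ gives
\[
  \tfrac12\tfrac{\d}{\d t}\|\dot u\|_H^2+\Re\frb(t,\dot u,\dot u)+\Re\fra(t,u,\dot u)=\Re\langle f,\dot u\rangle\qquad t\text{-a.e.}
\]
Since $\fra$ need not be symmetric, one only uses $|\Re\fra(t,u,\dot u)|\le M\|u\|_V\|\dot u\|_V$ together with $\|u(t)\|_V\le\|u_0\|_V+\sqrt t\,\|\dot u\|_{L^2(0,t;V)}$; after Young's inequality, absorbing a small multiple of $\|\dot u\|_{L^2(0,s;V)}^2$ into the coercivity term of $\frb$, Gronwall's lemma yields $\|\dot u\|_{C([0,T];H)}+\|\dot u\|_{L^2(0,T;V)}\le C(\|u_0\|_V+\|u_1\|_H+\|f\|_{L^2(0,T;V')})$. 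Applying this with $u_0=u_1=0$, $f=0$ gives $\dot u=0$, hence $u=u_0=0$, which is uniqueness; and in general the remaining parts of $\|u\|_{\MR(V,V,V')}$ are read off from $u(t)=u_0+\int_0^t\dot u$ and $\ddot u=f-\B(\cdot)\dot u-\A(\cdot)u$, giving \eqref{eq:MR_estimate_damped}.
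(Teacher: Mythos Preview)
Your proof is correct and follows the same overall strategy as the paper: apply Lions' Representation Theorem to a sesquilinear form obtained by pairing the equation with $\dot\phi$, then identify the resulting $u$ as the solution and finish with an energy estimate.

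The implementation differs in several places, all of them legitimate. Where the paper reduces to coercive forms via the substitution $v=e^{\omega t}u$ and to a short interval $T<T_0$ (concatenating afterwards), you instead introduce the weight $e^{-\lambda t}$ with $\lambda$ large, which handles quasi-coercivity and arbitrary $T$ in one stroke; this is precisely the device the paper itself uses later in Sections~4 and~5. Where the paper encodes the initial position through the boundary term $\fra(0,u(0),v(0))$ in $E$, you first reduce to $u_0=0$ and build that into $\mathcal H$; this costs nothing since $t\mapsto\A(t)u_0\in L^2(0,T;V')$. For uniqueness the paper again exploits $T<T_0$, while you run Gronwall on $\|\dot u(s)\|_H^2+\|\dot u\|_{L^2(0,s;V)}^2$; both work. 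Finally, the paper obtains \eqref{eq:MR_estimate_damped} from the closed graph theorem, whereas you extract it directly from the energy inequality---this is essentially the content of the paper's Proposition~\ref{pro-est} extended by Gronwall to all of $[0,T]$, and has the advantage of giving an explicit constant. One minor point: your test space $\V$ requires only $\phi\in H^2(0,T;H)$ rather than the paper's $H^2(0,T;V)$; the integration by parts needed to recover $\dot u(0)=u_1$ then relies on the product rule for $\langle\dot u,\psi\rangle$ with $\dot u\in L^2(0,T;V)\cap H^1(0,T;V')$ and $\psi\in L^2(0,T;V)\cap H^1(0,T;H)$, which is a mild extension of Lemma~\ref{lemma:int_by_part0} but entirely standard.
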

As  mentioned in the introduction, this theorem was first proved by Lions \cite[p.\ 151]{Lio61} under an additional regularity assumption on $t \mapsto \fra(t,u,v)$ and $t \mapsto \frb(t,u,v)$. This regularity assumption was removed  in Dautray-Lions \cite[p.\ 667]{DL88}, but taking $f \in L^2(0, T, H)$ and considering mainly symmetric forms (they allow  some  non-symmetric perturbations).  Their proof is based on a Galerkin method. Another proof of 
Theorem \ref{thm:MR_in_V'_damped} was given recently by Batty, Chill and Srivastava  \cite{BCS} but they consider only the case $u_0 = u_1 = 0$.  
Our proof is based on Theorem \ref{thlions} and is in the spirit of  
 Lions \cite{Lio61}. It is   different from the proofs in \cite{DL88} and  \cite{BCS}.  
 
A  classical result of Lions says that
\begin{equation}\label{eq:embedding_in_continuous_functions}
    \MR(V,V') :=  L^2(0,T,V) \cap H^1(0,T;V') \hookrightarrow C([0,T];H),
\end{equation}
and also that for $u\in \MR(V,V')$ the function $\norm{u(.)}^2_H$ is in $W^{1,1}(0,T)$ with
\begin{equation}\label{eq:derivative_of_norm_in_H}
    (\norm{u}^2_H)\dot{} = 2 \Re \langle\dot u, u \rangle, 
\end{equation}
see \cite[p.\ 106]{Sho97} and \cite[p.570]{DL88}. 
This implies that $\MR(V,V,V') \hookrightarrow C([0,T];V)\cap C^1([0,T];H)$.
 Thus for $u \in \MR(V,V,V')$, both $u(0)$ and $\dot u (0)$ make sense. 

We start with the  following basic  lemma.
\begin{lemma}\label{lem:estimates_damped}
	For $v \in H^1(0,T;V)$ we have
	\begin{align*}
		\Big( \int_0^T \norm{v(t)}_V^2 \ \d t\Big)^{1/2}
			\le T \Big(\int_0^T \norm{\dot v(s)}^2_V \ \d s\Big)^{1/2}
				+ \sqrt T \norm{ v(0)}_V.
	\end{align*}
\end{lemma}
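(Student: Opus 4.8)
The plan is to exploit the fundamental theorem of calculus in the Bochner sense together with the Cauchy--Schwarz inequality. Since $v \in H^1(0,T;V)$, for every $t \in [0,T]$ one has the representation $v(t) = v(0) + \int_0^t \dot v(s)\,\d s$ in $V$, so by the triangle inequality for the Bochner integral,
\[
\norm{v(t)}_V \le \norm{v(0)}_V + \int_0^t \norm{\dot v(s)}_V \,\d s \le \norm{v(0)}_V + \int_0^T \norm{\dot v(s)}_V \,\d s .
\]

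Next I would apply the Cauchy--Schwarz inequality to the last integral, bounding $\int_0^T \norm{\dot v(s)}_V \,\d s \le \sqrt T \big(\int_0^T \norm{\dot v(s)}_V^2\,\d s\big)^{1/2}$, which yields the pointwise estimate
\[
\norm{v(t)}_V \le \norm{v(0)}_V + \sqrt T \Big(\int_0^T \norm{\dot v(s)}_V^2\,\d s\Big)^{1/2} \qquad (t \in [0,T]) .
\]
The right-hand side is independent of $t$. Squaring, integrating over $t \in [0,T]$ (picking up a factor $T$), and taking square roots then gives
\[
\Big(\int_0^T \norm{v(t)}_V^2\,\d t\Big)^{1/2} \le \sqrt T\,\norm{v(0)}_V + T \Big(\int_0^T \norm{\dot v(s)}_V^2\,\d s\Big)^{1/2},
\]
which is the claimed inequality.

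There is no real obstacle here; the only point requiring a word of care is the justification of the identity $v(t) = v(0) + \int_0^t \dot v(s)\,\d s$, i.e.\ that an $H^1(0,T;V)$ function has an absolutely continuous representative with values in $V$ satisfying the fundamental theorem of calculus --- this is standard for Bochner--Sobolev functions and may simply be quoted.
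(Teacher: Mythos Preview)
Your proof is correct, and in fact a bit more direct than the paper's. The paper argues through the inner product: it writes $\int_0^T \norm{v(t)}_V^2\,\d t = \int_0^T (v(t)\mid v(t))_V\,\d t$, inserts the representation $v(t)=v(0)+\int_0^t \dot v(s)\,\d s$ in one factor only, applies Fubini and Cauchy--Schwarz, and finally arrives at a self-referencing inequality of the form
\[
\int_0^T \norm{v(t)}_V^2\,\d t \le \Big(T\int_0^T \norm{v(t)}_V^2\,\d t\Big)^{1/2}\Big[\Big(T\int_0^T \norm{\dot v(s)}_V^2\,\d s\Big)^{1/2}+\norm{v(0)}_V\Big],
\]
from which one divides through. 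Your route avoids this detour entirely: you bound $\norm{v(t)}_V$ pointwise by a $t$-independent constant via the triangle inequality and Cauchy--Schwarz, then take the $L^2$-norm in $t$. The upshot is that your argument uses only the norm (no inner product) and hence works verbatim for any Banach space $V$, whereas the paper's proof exploits the Hilbert structure. Both yield exactly the same constants $T$ and $\sqrt T$.
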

\begin{proof}
	Note that $v(t)= v(0)+\int_0^t \dot v(s) \ \d s$, thus
	\begin{align*}
		\int_0^T \norm{v(t)}_V^2 \ \d t 
			&= \int_0^T \Big( v(0) + \int_0^t \dot v(s) \ \d s \,\Big\vert\, v(t) \Big)_V \ \d t\\
			&= \int_0^T \int_s^T (\dot v(s) \mid v(t))_V \ \d t\ \d s
				+\int_0^T (v(0) \mid v(t))_V \ \d t\\
			&\le \int_0^T \int_0^T \norm{\dot v(s)}_V \norm{v(t)}_V \ \d t\ \d s
				+ \int_0^T \norm{ v(0)}_V\norm{v(t)}_V \ \d t\\
			&\le \int_0^T \norm{v(t)}_V \ \d t \ \Big(\int_0^T \norm{\dot v(s)}_V \ \d s
				+ \norm{ v(0)}_V \Big)\\
			&\le \Big(T \int_0^T \norm{v(t)}_V^2 \ \d t\Big)^{1/2} 
				\Big(  \Big(T \int_0^T \norm{\dot v(s)}^2_V \ \d s\Big)^{1/2}
				+ \norm{ v(0)}_V \Big). \tag*{\qedhere}
	\end{align*}
\end{proof}

\begin{proof}[Proof of Theorem \ref{thm:MR_in_V'_damped}]
It suffices to show that there exists a unique solution in the case where  $T<T_0$ 
and  $T_0>0$ is a constant that depends only on the constants $M, \omega$ and $\alpha$ of \eqref{eq:a_continuous} and \eqref{eq:H-elliptic}.
Indeed we can extend this solution to $[0,T]$ for any fixed $T$ as follows. We write the interval $[0,T]$ as a finite union of sub-intervals $[\tau_i, \tau_{i+1}]$, each has length 
less than $T_0$. On each interval $[\tau_i, \tau_{i+1}]$ we have a unique solution $u^{i}$ with $u^{i}(\tau_i)  \in V$, $\dot u^{i}(\tau_i)  \in H$ 
and $u^{i} \in \MR(V,V,V') \hookrightarrow C^1([\tau_i,\tau_{i+1}];H) \cap C([\tau_i,\tau_{i+1}];V)$. On $[\tau_{i+1}, \tau_{i+2}]$ we solve the equation with 
$u^{i+1}(\tau_{i+1}) = u^{i}(\tau_{i+1})$  and $\dot u^{i+1}(\tau_{i+1}) = \dot u^{i}(\tau_{i+1})$. We define $u$ on $[0,T]$ by  $u = u^{i}$ on $[\tau_i, \tau_{i+1}]$ and check easily that 
$u \in \MR(V,V,V')$ (on $[0,T]$)  is the unique solution to \eqref{eq:SO_CP_damped}. 

We prove existence of a solution in the case where 
\begin{equation}\label{eq:T}
	T< T_0 = \min \left\{\frac{\alpha^2}{M^2}, \frac{\alpha}{\sqrt{2} M }\right\}.
\end{equation}
Note that we may assume throughout this proof that the forms $\fra$ and $\frb$ are both coercive.  Indeed, set 
$v(t) = e^{wt} u(t)$ then we have 
\begin{multline}\label{coercive}
 \ddot v(t) + \B(t)\dot v(t) + \A(t)v(t)\\
	= e^{wt} \left[ \ddot u(t) + (\B(t) + 2w) \dot u(t) + (\A(t) + w \B(t) + w^2)u(t) \right].
\end{multline}
Since $\fra$ and $\frb$ are quasi-coercive, we  may choose $w$ large enough such that $\frb + 2w$ and $\fra + w \frb + w^2$ are coercive. Note also that $v \in \MR(V,V,V')$ {\it if and only if}
$u \in \MR(V,V,V')$. 

 We define the Hilbert space 
$\mathcal H := H^1(0,T;V)$ endowed with its usual  norm 
$\norm u_{\mathcal H} := \norm u_{H^1(0,T;V)}$ 
and the pre-Hilbert space 
$$\mathcal V := \{ v \in H^2(0,T;V): \dot v(T) = 0\}$$
 with norm $\norm{.}_{\mathcal V} := \norm{.}_{\mathcal H}$. 
Further we define the sesquilinear form $E\colon \mathcal H \times \V \to \C$ by
\begin{align*}
	E(u,v) := &-\int_0^T (\dot u \mid  \ddot v  )_H \ \d t
			+ \int_0^T  \frb(t, \dot u,\dot v) \ \d t\\
			&{}+ \int_0^T  \fra(t,u,\dot v) \ \d t + \fra(0, u(0),v(0))
\end{align*}
and for $u_0 \in V$, $u_1 \in H$ and $f \in L^2(0,T;V')$ we define $F\colon \V \to \C$ by
\[
	F(v):= \int_0^T  \langle f, \dot v \rangle \ \d t + \fra(0,u_0,v(0)) + (u_1\mid \dot v(0))_H.
\]
We claim  that
\begin{enumerate}
	\item[1)] $E(.,v) \in \mathcal H'$ and $F \in \V'$;
	\item[2)] $E$ is coercive; i.e., there exists a $C>0$ such that
		\(
			\abs{E(v,v)} \ge C \norm{v}_{\mathcal H}^2
		\)
		for all $v \in \V$.
\end{enumerate}
Suppose for a moment that  1) and 2) are satisfied. Then we can  apply  Lions's representation theorem  (see Theorem \ref{thlions}) and obtain  $u \in \mathcal H$ such that
\begin{equation}\label{eq:riesz_solution_damped}
	E(u,v) = F(v) \quad  \forall\ v \in \V.
\end{equation}
 We show  that $u$ is a solution of \eqref{eq:SO_CP_damped}. 

Let  $\psi(t) \in \D(0,T)$ and $w \in V$  and choose  $v(t) := \int_0^t \psi(s) \, \d s\, w$.   It follows from  \eqref{eq:riesz_solution_damped} that 
\begin{align*}
	&- \int_0^T \langle \dot u(t), w \rangle \dot \psi(t) dt
		= \int_0^T \langle f(t) - \B(t) \dot u- \A(t) u(t), w \rangle  \psi(t) \ \d t.
\end{align*}
This means that $\dot u \in H^1(0,T;V')$,  hence $u \in \MR(V,V,V')$ and
\begin{equation}\label{eq:CP_satisfied_damped}
	\ddot u(t) + \B(t) \dot u(t) + \A(t)u(t) = f(t) \quad t\text{-a.e.}
\end{equation}
in $V'$.
For  general $v \in \V$,  we use again  \eqref{eq:riesz_solution_damped} and integration by parts  to  obtain 
\begin{align*}
	&(\dot u(0) \mid  \dot v(0))_H + \int_0^T  \langle \ddot u, \dot v \rangle \ \d t
			+ \int_0^T  \frb(t,\dot u,\dot v) \ \d t
			+ \int_0^T  \fra(t,u,\dot v) \ \d t \\
			&\qquad+ \fra(0, u(0),v(0))
	= \int_0^T \langle f, \dot v \rangle \ \d t + \fra(0,u_0,v(0)) + (u_1 \mid  \dot v(0))_H. 
\end{align*}
This equality  together with \eqref{eq:CP_satisfied_damped} imply  that
\begin{align*}
	(\dot u(0) \mid  \dot v(0))_H + \fra(0, u(0),v(0)) = \fra(0,u_0,v(0)) + (u_1\mid  \dot v(0))_H.
\end{align*}
Since $v \in \V$ is  arbitrary we obtain that $u(0) =u_0$ and $\dot u(0) = u_1$. Therefore,  $u$ is a solution of \eqref{eq:SO_CP_damped} on $[0,T]$ 
for $T \le T_0$ and $T_0$ is such that the above properties 1) and 2) are satisfied. 

Now we return to  1) and 2). Property 1) is obvious. We show the coercivity property 2). 
 Let $v \in \V$. 
The equality  $\frac \d{\d t} \norm{\dot v(t) }_H^2 
		=   2 \Re (\ddot v(t) \mid \dot v(t))_H$ implies
\[\int_0^T  \Re ( \ddot v \mid \dot v )_H\ \d t =  -\frac 1 2 \norm{\dot v(0)}^2_H.
\]
It follows that 
 \begin{align*}
	\abs{E(v,v)} &\ge \Re E(v,v)\\
	&= \frac 1 2 \norm{\dot v(0)}_H^2 + \int_0^T \Re\frb(t,\dot v,\dot v) \ \d t\\
		 &\quad + \int_0^T \Re \fra(t,v,\dot v) \ \d t + \Re\fra(0, v(0),v(0)). 
		\end{align*}
We use coercivity of $\frb, \fra$ and $V$-boundedness of $\fra$ to obtain 
 \begin{align*}
 	\abs{E(v,v)}   &\ge \frac 1 2 \norm{\dot v(0)}_H^2 
	+ \alpha \int_0^T  \norm{\dot v}_V^2 \ \d t\\
	&\quad- M \int_0^T \norm{v}_V \norm{\dot v}_V\ \d t + \alpha \norm{v(0)}_V^2.
	\end{align*}
Therefore, by Young's inequality, we have  
\begin{align*}
 	\abs{E(v,v)} 	&\ge \frac 1 2 \norm{\dot v(0)}_H^2 
	+ \frac \alpha 2 \int_0^T  \norm{\dot v}_V^2 \ \d t
	- \frac {M^2}{2 \alpha} \int_0^T  \norm{v}_V^2\ \d t + \alpha \norm{v(0)}_V^2.
	\end{align*}
Next we apply Lemma~\ref{lem:estimates_damped} to obtain 
	$$\abs{E(v,v)}  \ge \left(\frac \alpha 2 - \frac{M^2 T^2}{\alpha}\right) \int_0^T \norm{\dot v}_V^2 \ \d t + \left(\alpha - \frac{M^2 T}{\alpha}\right) \norm{v(0)}_V^2.$$
Now we use   \eqref{eq:T} and  the fact  that by  Lemma~\ref{lem:estimates_damped}, 
$\int_0^T \norm{v}_V^2 \, \d t$ is dominated (up to a constant) by $ \int_0^T \norm{\dot v}_V^2 \, \d t +  \norm{v(0)}_V^2$.   We obtain  2). 

Next we prove  uniqueness.  Suppose that $u$ and $v$ are two solutions of \eqref{eq:SO_CP_damped} which are in $\MR(V,V,V')$. Set $w = u -v$. Clearly 
$w \in \MR(V,V,V')$ and satisfies (in $V'$) 
$$ \ddot w(t) + \B(t)\dot w(t) + \A(t)w(t) = 0, \quad w(0) = 0, \, \dot w (0) = 0.$$
We show that $w=0$.
For fixed $t \in (0,T]$ we have
$$\int_0^t \Re \langle  \ddot w, \dot w \rangle \ \d s + \int_0^t \Re \frb(s, \dot w, \dot w) \ \d s +  \int_0^t \Re \fra(s,  w, \dot w) \ \d s = 0.$$
Using \eqref{eq:derivative_of_norm_in_H} we have 
$$\int_0^t \Re \langle  \ddot w, \dot w \rangle \ \d s = \frac{1}{2} \int_0^t \big(\norm{\dot w}_H^2\big)\dot{} \ \d s = \frac{1}{2} \norm{\dot w(t)}_H^2 - \frac{1}{2} \norm{\dot w(0)}_H^2 
= \frac{1}{2} \norm{\dot w(t)}_H^2,$$
and hence
 \begin{align*}
 0 & = \frac{1}{2} \norm{\dot w(t)}_H^2 + \int_0^t \Re \frb(s, \dot w, \dot w) \ \d s +  \int_0^t \Re \fra(s,  w, \dot w) \ \d s\\
 &\ge  \frac{1}{2} \norm{\dot w(t)}_H^2 + \alpha \int_0^t \norm{ \dot w}_V^2 \ \d s  - M \int_0^t \norm{w}_V \norm{ \dot w}_V \ \d s.
 \end{align*}
 Here we used coercivity of $\frb$ and $V$-boundedness of $\fra$. Therefore, by  Lemma~\ref{lem:estimates_damped}, we have 
 \begin{align*}
 0 &\ge  \frac{1}{2} \norm{\dot w(t)}_H^2 + \alpha \int_0^t \norm{ \dot w}_V^2 \ \d s  - M \Big(\int_0^t \norm{ w}_V^2 \ \d s\Big)^{1/2}\Big(\int_0^t \norm{ \dot w}_V^2 \ \d s\Big)^{1/2}\\
 & \ge \frac{1}{2} \norm{\dot w(t)}_H^2 + (\alpha - M T) \int_0^t \norm{ \dot w}_V^2 \ \d s.
 \end{align*}
By \eqref{eq:T} we obtain that $w = 0$.  
This shows uniqueness.

Finally, in order to prove the apriori estimate \eqref{eq:MR_estimate_damped}, we consider the operator
  $$ S \colon V \times H \times L^2(0,T, V') \mapsto \MR(V,V,V'), \quad (u_0, u_1, f) \mapsto u.$$
  This is a linear operator which is  well defined thanks to the uniqueness of the solution $u$ of  \eqref{eq:SO_CP_damped}. It is easy to see that 
  $S$ is a closed operator. Therefore it is continuous by the closed graph theorem. This gives \eqref{eq:MR_estimate_damped}. 
 \end{proof}
 The previous proof does not give any information on  the constant $C$ in \eqref{eq:MR_estimate_damped}.  For small 
 time $T$ one can  prove that $C$ depends only on the constants of the forms. This observation will be needed  in  our application to a quasi-linear problem. 
 \begin{proposition}\label{pro-est}
 If $T > 0$ is small enough, then the constant $C$ in \eqref{eq:MR_estimate_damped} depends only on the constants $w$, $\alpha$, $M$ and $T$.
 \end{proposition}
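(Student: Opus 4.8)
The plan is to re-examine the existence part of the proof of Theorem~\ref{thm:MR_in_V'_damped}, keeping all constants explicit, and to replace the closed graph argument used there for \eqref{eq:MR_estimate_damped} (which yields no control on $C$) by the quantitative information carried by Lions' representation theorem. Indeed, the standard proof of Theorem~\ref{thlions} (see \cite{ADLO}) produces not merely \emph{some} $u\in\mathcal H$ with $E(u,\cdot)=F$ on $\V$, but one with
\[
	\norm{u}_{\mathcal H}\ \le\ \tfrac1{C_E}\,\norm{F}_{\V'},
\]
where $C_E>0$ is any constant with $\abs{E(w,w)}\ge C_E\norm{w}_{\V}^2$ on $\V$: the Riesz representative $\Phi(w)\in\mathcal H$ of $E(\cdot,w)$ then satisfies $\norm{\Phi(w)}_{\mathcal H}\ge C_E\norm{w}_{\V}$, so $F\circ\Phi^{-1}$ extends to $\mathcal H$ with norm $\le C_E^{-1}\norm{F}_{\V'}$. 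Hence, for $T$ below a threshold to be computed, it suffices to bound $1/C_E$ and $\norm{F}_{\V'}$ from above by quantities involving only $w$, $\alpha$, $M$, $T$ (the embedding constant $c_H$ being a fixed structural datum).

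First, as in that proof, one reduces to the coercive case by the substitution $v=e^{-\lambda\cdot}u$: for $\lambda>0$ large enough, depending only on $w$ and $\alpha$, the forms $\fra+\lambda\frb+\lambda^2$ and $\frb+2\lambda$ are coercive with coercivity constant $\ge\alpha$ and $V$-bound $\le M_\lambda:=M(1+\lambda)+\lambda^2c_H^2$, and this substitution turns $(u_0,u_1,f)$ into $(u_0,\,u_1-\lambda u_0,\,e^{-\lambda\cdot}f)$ and the solution into $e^{\pm\lambda\cdot}$ times it, at the cost of factors depending only on $\lambda$, $c_H$, $T$. So it is enough to prove \eqref{eq:MR_estimate_damped} when $\fra$, $\frb$ are coercive with constants $\alpha$ and $M_\lambda$, for $T<T_0$ with $T_0$ given by \eqref{eq:T} (with $M_\lambda$ in place of $M$), with a constant depending only on $\alpha$, $M_\lambda$, $c_H$, $T$. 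Inspecting the coercivity computation in the proof of Theorem~\ref{thm:MR_in_V'_damped}, but retaining the term $\tfrac12\norm{\dot v(0)}_H^2$ (which was simply dropped there), one obtains
\begin{align*}
	\abs{E(v,v)} &\ge \tfrac12\norm{\dot v(0)}_H^2+\Big(\tfrac\alpha2-\tfrac{M_\lambda^2T^2}{\alpha}\Big)\int_0^T\norm{\dot v}_V^2\,\d t\\
		&\quad{}+\Big(\alpha-\tfrac{M_\lambda^2T}{\alpha}\Big)\norm{v(0)}_V^2,
\end{align*}
whose right-hand side, by Lemma~\ref{lem:estimates_damped}, dominates $C_E\big(\norm{v}_{H^1(0,T;V)}^2+\norm{\dot v(0)}_H^2\big)$ with an explicit $C_E=C_E(\alpha,M_\lambda,T)>0$; in particular the pertinent norm on $\V$ here is $\norm{v}_{\V}^2:=\norm{v}_{H^1(0,T;V)}^2+\norm{\dot v(0)}_H^2$. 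With respect to this norm, the Cauchy--Schwarz bound
\[
	\abs{F(v)}\ \le\ \norm{f}_{L^2(0,T;V')}\norm{\dot v}_{L^2(0,T;V)}+M_\lambda\norm{u_0}_V\norm{v(0)}_V+\norm{u_1}_H\norm{\dot v(0)}_H
\]
together with the trace estimate $\norm{v(0)}_V\le(T^{-1/2}+T^{1/2})\norm{v}_{H^1(0,T;V)}$ (from $v(0)=v(t)-\int_0^t\dot v(s)\,\d s$) gives $\norm{F}_{\V'}\le C_F(M_\lambda,T)\big(\norm{u_0}_V+\norm{u_1}_H+\norm{f}_{L^2(0,T;V')}\big)$. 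Feeding these two estimates into the bound on $\norm{u}_{\mathcal H}$ controls $\norm{u}_{H^1(0,T;V)}$, and reading $\ddot u$ off the equation $\ddot u=f-\B(\cdot)\dot u-\A(\cdot)u$ (with $\norm{\A(t)}_{\L(V,V')},\norm{\B(t)}_{\L(V,V')}\le M_\lambda$) controls $\norm{\ddot u}_{L^2(0,T;V')}$, hence the whole $\MR(V,V,V')$-norm. Undoing the exponential substitution yields \eqref{eq:MR_estimate_damped} with $C$ depending only on $w$, $\alpha$, $M$ and $T$.

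No idea beyond the proof of Theorem~\ref{thm:MR_in_V'_damped} is needed; the content is entirely bookkeeping. Two points deserve care: Lions' representation theorem must be invoked in the quantitative form above, with $C_E$ appearing in the estimate for $\norm{u}_{\mathcal H}$ rather than as a bare existence statement; and the threshold hidden in ``$T$ small enough'' must be computed \emph{after} the exponential reduction, i.e.\ from $M_\lambda$ and $\alpha$ with $\lambda=\lambda(w,\alpha)$, so that it --- and hence the final constant $C$ --- depends only on $w$, $\alpha$, $M$ (and the fixed constant $c_H$).
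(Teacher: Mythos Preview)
Your argument is correct, but it takes a genuinely different route from the paper's. The paper does not revisit the Lions machinery at all: it derives the a~priori estimate directly by an energy method, pairing the equation with $\dot u$ and integrating. Concretely, from
\[
\int_0^t \Re\langle f,\dot u\rangle\,\d s
=\int_0^t \Re\langle \ddot u,\dot u\rangle\,\d s
+\int_0^t \Re\frb(s,\dot u,\dot u)\,\d s
+\int_0^t \Re\fra(s,u,\dot u)\,\d s,
\]
the identity $\int_0^t\Re\langle\ddot u,\dot u\rangle\,\d s=\tfrac12\norm{\dot u(t)}_H^2-\tfrac12\norm{u_1}_H^2$, coercivity of $\frb$, $V$-boundedness of $\fra$, Young's inequality, and Lemma~\ref{lem:estimates_damped} yield a bound on $\norm{u}_{H^1(0,T;V)}$ with an explicit constant, after which $\ddot u$ is read off the equation exactly as you do.

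Your approach instead extracts the estimate from the quantitative form of Lions' representation theorem, which forces you to augment the $\V$-norm by $\norm{\dot v(0)}_H^2$ so that $F$ becomes continuous with a computable constant; you then combine the explicit coercivity constant $C_E$ with the explicit $\norm{F}_{\V'}$. This is perfectly valid and has the conceptual merit of showing that the estimate is already encoded in the existence machinery, but it is heavier: one must re-open the proof of Theorem~\ref{thlions}, adjust the norm on $\V$, and check that the particular $u$ produced by the Riesz/Hahn--Banach step coincides with the (unique) PDE solution. The paper's energy argument is more elementary and entirely self-contained, at the price of repeating part of the uniqueness computation from Theorem~\ref{thm:MR_in_V'_damped}.
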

 \begin{proof} Let  $u \in \MR(V,V,V')$ be the   solution  of \eqref{eq:SO_CP_damped}. 
For fixed $t \in (0,T]$ we have
\[
\int_0^t \Re \langle f, \dot u \rangle \ \d s=\int_0^t \Re \langle \ddot u, \dot u \rangle \ \d s + \int_0^t \Re \frb(s, \dot u, \dot u) \ \d s +  \int_0^t \Re \fra(s,  u, \dot u) \ \d s.
\]
Since by \eqref{eq:derivative_of_norm_in_H}
\[
\int_0^t \Re \langle  \ddot u, \dot u \rangle \ \d s = \frac{1}{2} \int_0^t \big(\norm{\dot u}_H^2\big)\dot{} \ \d s = \frac{1}{2} \norm{\dot u(t)}_H^2 - \frac{1}{2} \norm{\dot u(0)}_H^2,
\]
it follows by  Young's inequality that
 \begin{align*}
  &\frac 1 \alpha \int_0^t \norm f_{V'}^2 \ \d s + \frac \alpha 4 \int_0^t \norm{\dot u}_V^2 \ \d s 
	\ge\int_0^t \norm f_{V'} \norm{\dot u}_V \ \d s \ge \int_0^t \Re \langle f, \dot u \rangle \ \d s \\
	&\quad=\frac{1}{2} \norm{\dot u(t)}_H^2  - \frac{1}{2} \norm{\dot u(0)}_H^2
		+ \int_0^t \Re \frb(s, \dot u, \dot u) \ \d s +  \int_0^t \Re \fra(s, u, \dot u) \ \d s\\
 	&\quad\ge  - \frac{1}{2} \norm{\dot u(0)}_H^2
		+ \alpha \int_0^t \norm{ \dot u}_V^2 \ \d s  - M \int_0^t \norm{u}_V \norm{ \dot u}_V \ \d s\\
	&\quad\ge  - \frac{1}{2} \norm{\dot u(0)}_H^2
		+ \frac {3\alpha} 4 \int_0^t \norm{ \dot u}_V^2 \ \d s  - \frac{M^2}{ \alpha} \int_0^t \norm{u}_V^2 \ \d s.
 \end{align*}
 Here we used coercivity of $\frb$ and $V$-boundedness of $\fra$. Therefore, by Lemma \ref{lem:estimates_damped}, we have 
 \begin{equation}\label{eq:MR_d_est}
\begin{split}
\frac 1 \alpha \int_0^t \norm f_{V'}^2 \ \d s + \frac{1}{2} \norm{\dot u(0)}_H^2 
&\ge \frac \alpha 2 \int_0^t \norm{ \dot u}_V^2 \ \d s
	- \frac{M^2}{ \alpha} \int_0^t \norm{ u}_V^2 \ \d s\\
 & \ge  \left(\frac\alpha 2 - \frac{ t^2(2M^2+\alpha)} \alpha \right) \int_0^t \norm{ \dot u}_V^2\ \d s\\
	&\quad -t\left(\frac{2M^2 + \alpha}{\alpha}\right) \norm{u(0)}_V^2 + \frac 1 2\int_0^t \norm{u}_V^2 \ \d s.
 \end{split}
\end{equation}
where we choose $t$ such that $\frac\alpha 2 > \frac{ t^2(2M^2+\alpha)} \alpha$.
Finally, since
\[
	\ddot u(s) = f(s) - \A \dot u(s) - \B u(s) \quad s\text{-a.e.}
\]
we obtain that
\[
	\norm{\ddot u(s)}^2_{V'} \le 3\norm{f(s)}_{V'}^2 + 3M \norm{\dot u(s)}_V^2 + 3M \norm {u(s)}_V^2 \quad s\text{-a.e.}
\]
This together with \eqref{eq:MR_d_est}  ends the proof of the proposition when $T$ is such that \[\frac\alpha 2 > \frac{ T^2(2M^2+\alpha)} \alpha.\tag*{}\]
 
 \end{proof}

\section{Maximal Regularity for the Damped Wave Equation in $H$}
Let $V, H$ be separable Hilbert spaces such that $V \underset d
\hookrightarrow H$ and let 
\[
    \fra, \frb\colon [0,T] \times V \times V \to \K
\]
be closed non-autonomous sesquilinear forms on which we impose the following conditions. 
Each can  be written as the sum of two non-autonomous forms
\[
    \fra(t,u,v)= \fra_1(t,u,v)+ \fra_2(t,u,v),  \quad   \frb(t,u,v)= \frb_1(t,u,v)+ \frb_2(t,u,v)  \quad u,v \in V
\]
where
\[
    \fra_1, \frb_1 \colon [0,T] \times V \times V \to \K
\]
satisfy the following assumptions 
\begin{enumerate}[label={\alph{*})}]
    \item $\abs{\fra_1(t,u,v)} \le M  \norm u_V \norm v_V$ for all $u,v \in V$, $t \in [0,T]$;
    \item $ \fra_1(t,u,u) \ge \alpha \norm u _V^2$ for all $u \in V$, $t \in [0,T]$ with $\alpha > 0$;
    \item $\fra_1(t,u,v) = \overline{\fra_1(t,v,u)}$  for all $u,v \in V$, $t \in [0,T]$;
    \item$\fra_1$ is piecewise Lipschitz-continuous; i.e., there exist $0= \tau_0 < \tau_1 < \dots < \tau_n =T$ such that 
\[
	\abs{\fra_1(t,u,v) - \fra_1(s,u,v)} \le \dot M \abs{t-s} \norm u_V \norm v_V
    	\]
    	for all $u,v \in V,$ $s,t \in [\tau_{i-1},\tau_i]$, $i \in \{1,\dots,n\}$,
\end{enumerate}
and similarly for $\frb_1$. Of course we may  choose  the same constants $M, \dot M$ and $\alpha$ for both forms $\fra_1$ and $\frb_1$. 
We may also  choose that same sub-intervals $0= \tau_0 < \tau_1 < \dots < \tau_n =T$ for both forms. 

The non-autonomous forms
\[
    \fra_2, \frb_2 \colon [0,T] \times V \times V\to \K
\]
are measurable and satisfy 
\begin{enumerate}[label={(\alph*)}]
    \item[e)] $\abs{\fra_2(t,u,v)} \le M \norm u_V \norm v_H$ for all $u, v \in V$, $t \in [0,T]$, 
\end{enumerate}
and similarly for $\frb_2$.

Note that by Lemma \ref{lem:differentiation}, if $\frc$ is a Lipschitz form on $[0,T]$, we may  define its  derivative $\dot \frc(t,.,.)$  and we have 
\begin{equation}\label{deriv}
\abs{\dot \frc(t,u,v)} \le \dot M  \norm u_V \norm v_V, \ u, v \in V
\end{equation}
for some constant $\dot M$.  We shall use this estimate for $\frc = \fra_1$ and for  $\frc = \frb_1$ on sub-intervals of $[0,T]$ where these forms are supposed to be Lipschitz. 

Let us denote by  $\A(t)$ and $\B(t)$ the operators  given by $\langle \A(t) u, v \rangle = \fra(t,u,v)$ and 
$\langle \B(t) u, v \rangle = \frb(t,u,v)$ for all $u, v \in V$.  

As in the previous section we consider  the damped wave equation.  Here we study the  maximal regularity property in  $H$ rather than  in $V'$. 
We  introduce the maximal regularity space
$$\MR(V,V,H) :=  H^1(0,T;V) \cap H^2(0,T;H).$$
We have  
\begin{theorem}\label{thm:MR_in_H}
	Let $\fra = \fra_1+\fra_2$ and $\frb = \frb_1+\frb_2$ be  non-autonomous $V$-bounded and quasi-coercive 
	forms satisfying the above properties $a)-e)$. 
	Then for every $u_0, u_1 \in V$ and $f \in L^2(0,T;H)$,
	there exists a unique solution 
	$u \in \MR(V,V,H)$ of the 
	non-autonomous second order Cauchy problem
	\begin{equation}\label{eq:SO_CP_H}
		\left\{
		\begin{aligned}
			&\ddot u(t) +\B(t)\dot u(t) + \A(t)u(t) = f(t) \quad t\text{-a.e.}\\
			&u(0)=u_0, \dot u(0) = u_1
		\end{aligned}
		\right.
	\end{equation}
	Moreover $\dot u(t) \in V$ for all $t \in [0,T]$. 
\end{theorem}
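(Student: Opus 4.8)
The plan is to mirror the proof of Theorem~\ref{thm:MR_in_V'_damped}: reduce to a short time interval and patch, construct the solution by Lions' Representation Theorem~\ref{thlions} applied to a suitable sesquilinear form, and then read off uniqueness from Theorem~\ref{thm:MR_in_V'_damped} and the a~priori estimate from the closed graph theorem. For the reductions, exactly as in the proof of Theorem~\ref{thm:MR_in_V'_damped}, it is enough to solve \eqref{eq:SO_CP_H} on a short interval $[0,T_0]$ with $T_0$ depending only on $M,\dot M,\alpha$ and to patch, using the partition points $\tau_i$ of assumption~d) as the gluing points: on each piece $\fra_1,\frb_1$ are genuinely Lipschitz, $u(\tau_i)\in V$ because $H^1(0,T_0;V)\hookrightarrow C([0,T_0];V)$, and $\dot u(\tau_i)\in V$ by the last assertion of the theorem on that piece, which is precisely the data needed to continue. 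The substitution $v=e^{\omega t}u$ of \eqref{coercive} reduces matters to coercive forms. Note that, under~e), $\A_2(\cdot)u(\cdot)$ and $\B_2(\cdot)\dot u(\cdot)$ lie in $L^2(0,T;H)$ for $u\in\MR(V,V,H)$, so the lower-order parts $\fra_2,\frb_2$ will cause no trouble in the coercivity estimate below, while the symmetric principal parts $\fra_1,\frb_1$ with their Lipschitz regularity are available via Lemma~\ref{lem:differentiation} for the integrations by parts.

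For the construction, I would apply Theorem~\ref{thlions} with $\mathcal H:=\MR(V,V,H)=H^1(0,T;V)\cap H^2(0,T;H)$ and $\mathcal V:=\{v\in H^2(0,T;V):\dot v(T)=0\}$ equipped with the norm of $\mathcal H$, so that $\mathcal V\hookrightarrow\mathcal H$. The form $E\colon\mathcal H\times\mathcal V\to\K$ should be the form $E_0$ used for Theorem~\ref{thm:MR_in_V'_damped} (which tests the equation against $\dot v$, reads $E_0(u,v)=-\int_0^T(\dot u\mid\ddot v)_H\,\d t+\int_0^T\frb(t,\dot u,\dot v)\,\d t+\int_0^T\fra(t,u,\dot v)\,\d t+\fra(0,u(0),v(0))$, and is coercive for the $H^1(0,T;V)$-norm) augmented by a term that in addition controls $\|\ddot v\|_{L^2(0,T;H)}$ — e.g.\ a term whose diagonal part equals $\int_0^T\|\ddot v\|_H^2\,\d t$ — together with the matching term $\int_0^T(f\mid\ddot v)_H\,\d t$ added to the functional $F_0$ from Theorem~\ref{thm:MR_in_V'_damped}. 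Condition~1) of Theorem~\ref{thlions} is routine, using $f\in L^2(0,T;H)$, $u_0,u_1\in V$ and $\mathcal H\hookrightarrow C([0,T];V)\cap C^1([0,T];H)$. For condition~2), one \emph{adds} the positive quantity $\int_0^T\|\ddot v\|_H^2\,\d t$ supplied by the new term to the lower bound $\Re E_0(v,v)\ge c\|v\|_{H^1(0,T;V)}^2$ established in the proof of Theorem~\ref{thm:MR_in_V'_damped}, obtaining $\Re E(v,v)\ge c'\|v\|_{\MR(V,V,H)}^2$ since $\|v\|_{\MR(V,V,H)}^2\asymp\|v\|_{H^1(0,T;V)}^2+\|\ddot v\|_{L^2(0,T;H)}^2$. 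Theorem~\ref{thlions} then yields $u\in\MR(V,V,H)$ with $E(u,v)=F(v)$ for all $v\in\mathcal V$; unravelling this identity over suitable test functions $v$ (first $v=\chi(\cdot)w$ with $\chi\in\D(0,T)$, $w\in V$, then general $v\in\mathcal V$ followed by integration by parts) recovers the equation $\ddot u+\B\dot u+\A u=f$ ($t$-a.e.\ in $H$) and the initial conditions $u(0)=u_0$, $\dot u(0)=u_1$.

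Uniqueness is immediate from Theorem~\ref{thm:MR_in_V'_damped}: a solution in $\MR(V,V,H)$ lies in $\MR(V,V,V')$, so the difference of two solutions solves the homogeneous $V'$-problem with zero data and vanishes. Hence the solution operator $V\times V\times L^2(0,T;H)\to\MR(V,V,H)$ is well defined, linear and closed, hence bounded, which yields $\|u\|_{\MR(V,V,H)}\le C(\|u_0\|_V+\|u_1\|_V+\|f\|_{L^2(0,T;H)})$. Finally, for $\dot u(t)\in V$ for every $t$: from the equation $\A_1(t)u(t)+\B_1(t)\dot u(t)=f(t)-\ddot u(t)-\A_2(t)u(t)-\B_2(t)\dot u(t)\in L^2(0,T;H)$, and combining this with $\dot u\in L^2(0,T;V)\cap C([0,T];H)$ and $u\in C([0,T];V)$, a further energy/interpolation argument exploiting the symmetry of $\fra_1,\frb_1$ and the strong damping upgrades $\dot u$ to a continuous $V$-valued function; for instance one may pass to the variable $z=\dot u+\gamma u$, which solves a first-order Cauchy problem for which $H$-maximal regularity places $z$, hence $\dot u$, in $C([0,T];V)$.

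The step I expect to be the genuine obstacle is the precise choice of the augmenting term in $E$: it must simultaneously (i) make $E(\cdot,v)$ bounded on $\mathcal H=\MR(V,V,H)$ and $F$ an element of $\mathcal V'$, (ii) contribute exactly $\int_0^T\|\ddot v\|_H^2\,\d t$ (and no uncontrolled boundary terms of the type $\|\dot v(0)\|_V^2$) to the coercivity bound, and (iii) not destroy the unravelling that recovers the equation — a naive choice such as adding $\int_0^T\fra(t,u,\ddot v)\,\d t$, or simply testing everything against $\ddot v$, produces after the integrations by parts of Lemma~\ref{lem:differentiation} (using symmetry) the \emph{wrong-sign} term $-\int_0^T\fra_1(t,\dot v,\dot v)\,\d t$ and boundary terms in $\|\dot v(0)\|_V$ that cannot be absorbed. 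Balancing these three requirements, together with the usual bookkeeping of the $\dot\fra_1,\dot\frb_1$ remainders and the cross terms via Lemma~\ref{lem:estimates_damped} and smallness of $T$, is the delicate part; it is here that the symmetry, the piecewise Lipschitz-continuity, the damping being of the same order as $\A$, and the assumption $u_1\in V$ are all genuinely used.
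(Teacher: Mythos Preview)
You have correctly identified the overall architecture (Lions' representation theorem, uniqueness via Theorem~\ref{thm:MR_in_V'_damped}, piecewise patching) and, more importantly, you have correctly identified the real obstacle: how to get a term $\int_0^T\|\ddot v\|_H^2\,\d t$ into the coercivity estimate without producing the wrong-sign term $-\int_0^T\fra_1(t,\dot v,\dot v)\,\d t$ and uncontrolled boundary terms $\|\dot v(0)\|_V^2$. But you have not resolved it. Your proposal ``$E_0$ plus an augmenting term whose diagonal is $\int_0^T\|\ddot v\|_H^2\,\d t$'' is not a proof until that term is written down and shown to satisfy (i)--(iii); as you yourself note, the obvious candidates fail.

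The paper's resolution is different from what you sketch and worth recording. Instead of keeping the $\dot v$-tested form $E_0$ and adding something, the paper tests the \emph{entire} equation against $e^{-\lambda t}\ddot v$ and adds $V$-inner-product boundary terms:
\[
E(u,v)=\int_0^T e^{-\lambda t}(\ddot u\mid\ddot v)_H\,\d t+\int_0^T e^{-\lambda t}\frb(t,\dot u,\ddot v)\,\d t+\int_0^T e^{-\lambda t}\fra(t,u,\ddot v)\,\d t+\eta(\dot u(0)\mid\dot v(0))_V+\eta(u(0)\mid v(0))_V,
\]
on the spaces $\mathcal H=\{u\in H^2(0,T;H)\cap H^1(0,T;V):u(0),\dot u(0),\dot u(T)\in V\}$ and $\mathcal V=H^2(0,T;V)$. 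The two devices that defeat your obstacle are: (a) the exponential weight, which after integration by parts turns $\int e^{-\lambda t}\Re\frb_1(t,\dot v,\ddot v)\,\d t$ into $\tfrac{\lambda}{2}\int e^{-\lambda t}\frb_1(t,\dot v,\dot v)\,\d t+\dots$; for $\lambda$ large this $\tfrac{\lambda\alpha}{2}\|\dot v\|_V^2$ term swallows the ``wrong-sign'' $-\int e^{-\lambda t}\fra_1(t,\dot v,\dot v)\,\d t$ you were worried about (Lemma~\ref{lem:estimates_H}(iii)); and (b) the $\eta$-weighted $V$-boundary terms, which for $\eta$ large absorb the boundary contributions $\frb_1(0,\dot v(0),\dot v(0))$, $\fra_1(0,v(0),v(0))$, $\fra_1(0,v(0),\dot v(0))$ that you flagged as uncontrolled. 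Thus large $\lambda,\eta$ replace your ``small $T$'' mechanism. Finally, the conclusion $\dot u(t)\in V$ is obtained not by a separate first-order argument but simply because $\dot u(T)\in V$ is built into $\mathcal H$, and one may replace $T$ by any $t\in(0,T)$.
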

For a  related result see Lions \cite[p.\ 155]{Lio61}. However the result proved there is restricted to $u_1 = 0$ and assumes $f, f' \in L^2(0,T, H)$. Our proof resembles  that of 
Theorem \ref{thm:MR_in_V'_damped}  and uses similar ideas as in Lions \cite{Lio61}.

We use the following lemma for the proof of Theorem~\ref{thm:MR_in_H}.
\begin{lemma}\label{lem:estimates_H} Suppose that the forms  $\fra_1$ and $\frb_1$   are  Lipschitz continuous on $[0,T]$. 
	Let $v \in H^2(0,T;V)$ and $ \epsilon >0$. Then
	\begin{align*}
		\text{\rm{(i)} } &\int_0^T e^{-\lambda t} \Re \frb_1(t,\dot v,\ddot v) \ \d t 
			= \tfrac \lambda 2 \int_0^T e^{- \lambda t} \frb_1(t,\dot v, \dot v) \ \d t
			- \tfrac 1 2 \int_0^T e^{- \lambda t} \dot\frb_1(t, \dot v, \dot v) \ \d t\\ 
		&\hspace{2cm} +\tfrac 1 2 e^{- \lambda T} \frb_1(T,\dot v(T),\dot v(T))-\tfrac 1 2 \frb_1(0,\dot v(0),\dot v(0)).\\
		\text{\rm{(ii)} } &\int_0^T e^{-\lambda t} \Re \fra_1(t,v,\ddot v) \ \d t =
			\tfrac \lambda 2 e^{-\lambda T} \fra_1(T,v(T),v(T)) -\tfrac \lambda 2 \fra_1(0,v(0),v(0))\\ 
			&\hspace{2cm}+ e^{-\lambda T} \Re\fra_1(T,v(T),\dot v(T)) - \Re\fra_1(0,v(0),\dot v(0))\\
			&\hspace{2cm}+ \tfrac {\lambda^2} 2 \int_0^T e^{-\lambda t} \fra_1(t, v, v) \ \d t
			- \tfrac \lambda 2 \int_0^T e^{-\lambda t}  \dot\fra_1(t, v, v) \ \d t\\
			&\hspace{2cm}-  \int_0^T e^{-\lambda t} \Re\dot\fra_1(t, v, \dot v) \ \d t
			- \int_0^T e^{-\lambda t} \fra_1(t, \dot v,\dot v) \ \d t.\\
		\text{\rm{(iii)} } &\int_0^T e^{-\lambda t} \Re \frb_1(t,\dot v,\ddot v) \ \d t + \int_0^T e^{-\lambda t} \Re \fra_1(t,v,\ddot v) \ \d t\\
			&\hspace{1.6cm}\ge \tfrac 1 2 (\alpha \lambda - 2\dot M - 2M) \int_0^T e^{-\lambda t} \norm{\dot v}_V^2 \ \d t\\
			 &\hspace{2cm}+(\tfrac \lambda 2 (\alpha \lambda - \dot M) - \tfrac{\dot M^2}{2}) \int_0^T e^{-\lambda t} \norm{v}_V^2 \ \d t\\
			&\hspace{2cm}+\tfrac 1 2 e^{-\lambda T} \left[ (\alpha-\epsilon) \norm{\dot v(T)}^2_V + (\lambda \alpha- \tfrac{\dot M^2}{\epsilon}) \norm{v(T)}^2_V \right]\\
			&\hspace{2cm}-\tfrac 1 2 \frb_1(0,\dot v(0),\dot v(0)) -\tfrac \lambda 2 \fra_1(0,v(0),v(0)) - \Re \fra_1(0,v(0),\dot v(0)). 
	\end{align*}
\end{lemma}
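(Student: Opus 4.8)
The plan is to prove (i) and (ii) by integration by parts in the $t$-variable, applying the differentiation rule of Lemma \ref{lem:differentiation}, and then combine them to obtain (iii) via the coercivity and $V$-boundedness hypotheses a)--e) together with Young's inequality.

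First I would establish (i). The key observation is that $\frb_1$ is symmetric and Lipschitz, so by Lemma \ref{lem:differentiation} applied to $\dot v \in H^1(0,T;V)$ we get $\frac{\d}{\d t}\frb_1(t,\dot v,\dot v) = 2\Re\frb_1(t,\dot v,\ddot v) + \dot\frb_1(t,\dot v,\dot v)$. Multiplying by $e^{-\lambda t}$, integrating over $[0,T]$, and integrating by parts the term $\int_0^T e^{-\lambda t}\frac{\d}{\d t}\frb_1(t,\dot v,\dot v)\,\d t$ (which produces the boundary terms at $0$ and $T$ and the factor $\lambda\int_0^T e^{-\lambda t}\frb_1(t,\dot v,\dot v)\,\d t$) yields (i) after rearranging. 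For (ii) the same idea is applied twice: first $\frac{\d}{\d t}\fra_1(t,v,v) = 2\Re\fra_1(t,v,\dot v) + \dot\fra_1(t,v,v)$, then $\frac{\d}{\d t}\Re\fra_1(t,v,\dot v) = \Re\fra_1(t,\dot v,\dot v) + \Re\fra_1(t,v,\ddot v) + \Re\dot\fra_1(t,v,\dot v)$. Multiplying each by the appropriate power of $e^{-\lambda t}$, integrating by parts, and bookkeeping the boundary contributions at $0$ and $T$ gives the stated identity; this is the most calculation-heavy part but is entirely routine once the differentiation formulas are in hand.

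For (iii) I would add the right-hand sides of (i) and (ii) and estimate from below. The terms $\frac\lambda2\int e^{-\lambda t}\frb_1(t,\dot v,\dot v) - \int e^{-\lambda t}\fra_1(t,\dot v,\dot v)$ combine, after using coercivity ($\frb_1(t,\dot v,\dot v)\ge\alpha\norm{\dot v}_V^2$) and $V$-boundedness of $\fra_1$ ($|\fra_1(t,\dot v,\dot v)|\le M\norm{\dot v}_V^2$), into something bounded below by $\frac12(\alpha\lambda - 2M)\int e^{-\lambda t}\norm{\dot v}_V^2$; the derivative terms $-\frac12\int e^{-\lambda t}\dot\frb_1(t,\dot v,\dot v)$ and $-\int e^{-\lambda t}\Re\dot\fra_1(t,v,\dot v)$ are controlled using \eqref{deriv}, the first contributing $-\dot M\int e^{-\lambda t}\norm{\dot v}_V^2$ and the second, after Young's inequality with weight, splitting between $\norm{\dot v}_V^2$ and $\norm v_V^2$. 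The $\norm v_V^2$ terms come from $\frac{\lambda^2}2\int e^{-\lambda t}\fra_1(t,v,v) - \frac\lambda2\int e^{-\lambda t}\dot\fra_1(t,v,v)$ plus the Young leftover, giving the coefficient $\frac\lambda2(\alpha\lambda-\dot M) - \frac{\dot M^2}2$. The boundary terms at $T$: from (i) we keep $\frac12 e^{-\lambda T}\frb_1(T,\dot v(T),\dot v(T))\ge\frac\alpha2 e^{-\lambda T}\norm{\dot v(T)}_V^2$, and from (ii) the terms $\frac\lambda2 e^{-\lambda T}\fra_1(T,v(T),v(T)) + e^{-\lambda T}\Re\fra_1(T,v(T),\dot v(T))$ are handled by coercivity on the first and Young's inequality with parameter $\epsilon$ on the cross term, $|\Re\fra_1(T,v(T),\dot v(T))|\le\frac\epsilon2\norm{\dot v(T)}_V^2 + \frac{\dot M^2}{2\epsilon}\norm{v(T)}_V^2$ (using the bound $M\le\dot M$ is not needed — one simply uses the $V$-boundedness constant, but since the statement writes $\dot M$ one absorbs $M\le\dot M$ or just relabels), producing the claimed $\frac12 e^{-\lambda T}[(\alpha-\epsilon)\norm{\dot v(T)}_V^2 + (\lambda\alpha - \frac{\dot M^2}\epsilon)\norm{v(T)}_V^2]$. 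The boundary terms at $0$ are simply carried along unchanged.

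The main obstacle is purely organizational: keeping careful track of the many boundary terms at $t=0$ and $t=T$ and of which Young's-inequality split sends a given cross term to $\norm{\dot v}_V^2$ versus $\norm v_V^2$ (and with which weights), so that the coefficients match the stated ones exactly. There is no conceptual difficulty — the hypotheses a)--e) and \eqref{deriv} supply precisely the bounds needed — but the accounting must be done meticulously, in particular ensuring the cross term $\Re\dot\fra_1(t,v,\dot v)$ is split so that its $\norm{\dot v}_V^2$ part is absorbed into the $\frac12(\alpha\lambda - 2\dot M - 2M)$ coefficient and its $\norm v_V^2$ part into the $-\frac{\dot M^2}2$ coefficient.
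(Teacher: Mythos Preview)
Your proposal is correct and follows essentially the same route as the paper: (i) and (ii) are obtained via the product rule together with the differentiation formula of Lemma~\ref{lem:differentiation} (the paper differentiates $e^{-\lambda t}\frb_1(t,\dot v,\dot v)$, $e^{-\lambda t}\fra_1(t,v,v)$ and $e^{-\lambda t}\fra_1(t,v,\dot v)$ directly and integrates, which is equivalent to your ``multiply then integrate by parts'' organization), and (iii) is derived by adding (i) and (ii) and applying coercivity, $V$-boundedness, and Young's inequality exactly as you describe. Your observation about the constant in the $T$-boundary term ($M$ versus $\dot M$) is well spotted; the paper's own intermediate step indeed has $2M\norm{v(T)}_V\norm{\dot v(T)}_V$, so the $\dot M^2/\epsilon$ in the final statement appears to be a harmless relabeling (or typo) rather than an additional hypothesis.
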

\begin{proof}
	The proof of (i) and (ii) is based on Lemma~\ref{lem:differentiation} and the product rule.
	
	Part (i) is a direct consequence of the formulae
	\begin{align*}
		\big(e^{-\lambda t} \frb_1(t,\dot v,\dot v)\big)\dot{}
		&= - \lambda e^{-\lambda t} \frb_1(t,\dot v,\dot v) 
			+ e^{-\lambda t} \dot\frb_1(t,\dot v,\dot v)
			+ 2 e^{-\lambda t} \Re \frb_1(t,\dot v,\ddot v).
	\end{align*}
	
	For (ii) we first calculate the following derivatives
	\begin{align*}
		\Re \big(e^{-\lambda t} \fra_1(t, v, v)\big)\dot{}
		&= - \lambda e^{-\lambda t} \fra_1(t, v, v) + e^{-\lambda t} \Re \dot\fra_1(t, v, v)\\
			&\quad+ 2 e^{-\lambda t} \Re \fra_1(t, v,\dot v)\\
		\Re \big(e^{-\lambda t} \fra_1(t, v, \dot v)\big)\dot{}
		&= - \lambda e^{-\lambda t} \Re\fra_1(t, v, \dot v) 
			+ e^{-\lambda t} \Re\dot\fra_1(t, v, \dot v)\\
			&\quad+ e^{-\lambda t} \fra_1(t, \dot v,\dot v)
			+ e^{-\lambda t} \Re\fra_1(t,  v,\ddot v) 
	\end{align*}
	then we multiply the first equation by $\frac \lambda 2$ and add the second equation.
	Now (ii) follows by integration over $t$ from $0$ to $T$.

	For (iii) we add (i) and (ii) and use coercivity of $\fra_1, \frb_1$ and $V$-boundedness of $\fra_1, \dot \fra_1, \frb_1, \dot \frb_1$.
	Thus
	\begin{align*}
		\int_0^T e^{-\lambda t}& \Re \frb_1(t,\dot v,\ddot v) \ \d t + \int_0^T e^{-\lambda t} \Re \fra_1(t,v,\ddot v) \ \d t\\
			&\ge \tfrac 1 2 (\alpha \lambda - \dot M - 2M) \int_0^T e^{-\lambda t} \norm{\dot v}_V^2 \ \d t\\
			&\quad +\tfrac \lambda 2 (\alpha \lambda - \dot M) \int_0^T e^{-\lambda t} \norm{v}_V^2 \ \d t
			 -  \dot M \int_0^T e^{-\lambda t} \norm{v}_V \norm{\dot v}_V \ \d t\\
			&\quad+\tfrac 1 2 e^{-\lambda T} \left[ \alpha \norm{\dot v(T)}^2_V + \lambda \alpha \norm{v(T)}^2_V - 2 M \norm{v(T)}_V \norm{\dot v(T)}_V \right]\\
			&\quad -\tfrac 1 2 \frb_1(0,\dot v(0),\dot v(0)) -\tfrac \lambda 2 \fra_1(0,v(0),v(0)) - \Re \fra_1(0,v(0),\dot v(0)).
			\end{align*}
We apply Young's inequality and see that the last term is bounded from below by 
			\begin{align*}
			& \tfrac 1 2 (\alpha \lambda - 2\dot M - 2M) \int_0^T e^{-\lambda t} \norm{\dot v}_V^2 \ \d t
			 +( \tfrac \lambda 2 (\alpha \lambda - \dot M)  - \tfrac{\dot M^2}{2}) \int_0^T e^{-\lambda t} \norm{v}_V^2 \ \d t\\
			&\quad+\tfrac 1 2 e^{-\lambda T} \left[ (\alpha-\epsilon) \norm{\dot v(T)}^2_V + 
			(\lambda \alpha- \tfrac{\dot M^2}{\epsilon}) \norm{v(T)}^2_V \right]\\
			&\quad -\tfrac 1 2 \frb_1(0,\dot v(0),\dot v(0)) -\tfrac \lambda 2 \fra_1(0,v(0),v(0)) - \Re \fra_1(0,v(0),\dot v(0))
	\end{align*}
	for $\epsilon >0$.
\end{proof}

\begin{proof}[Proof of Theorem \ref{thm:MR_in_H}]
Uniqueness follows from Theorem~\ref{thm:MR_in_V'_damped} and we only need to prove existence of a solution. As in the proof of 
Theorem~\ref{thm:MR_in_V'_damped} we may assume that the forms $\fra$ and $\frb$ are both coercive (see \eqref{coercive}). \\

\noindent {\it 1- Lipschitz-continuous forms}. Suppose first that the forms $\fra_1$ and $\frb_1$ are Lipschitz-continuous on $[0,T]$. 

 We define the Hilbert space 
\[
	\mathcal H := \{ u \in H^2(0,T;H) \cap H^1(0,T;V) : u(0), \dot u(0), \dot u(T) \in V \}
\]
with norm $\norm u_{\mathcal H}$ given by 
\[
	\norm u_{\mathcal H}^2 := \norm {\ddot u}^2_{L^2(0,T;H)} 
		+ \norm{u}^2_{H^1(0,T;V)} + \norm{u(0)}^2_V + \norm{\dot u(0)}_V^2 + \norm{\dot u(T)}_V^2
\]
and the pre-Hilbert space $\mathcal V := H^2(0,T;V)$ with norm $\norm{.}_{\mathcal V} := \norm{.}_{\mathcal H}$. 
Next we define the sesquilinear form $E\colon \mathcal H \times \V \to \C$ by
\begin{align*}
	E(u,v) := &\int_0^T e^{-\lambda t} (\ddot u \mid \ddot v)_H \ \d t
			+ \int_0^T e^{-\lambda t} \frb(t,\dot u,\ddot v) \ \d t
			+ \int_0^T e^{-\lambda t} \fra(t,u,\ddot v) \ \d t\\
		&{} + \eta (\dot u(0) \mid \dot v(0))_V + \eta (u(0) \mid v(0))_V, 
\end{align*}
where  $\lambda $ and $\eta$  are positive parameters. Later on, we will  choose them to be large enough.  
For $u_0, u_1 \in V$ and $f \in L^2(0,T;H)$,  we define $F\colon \V \to \C$ by
\[
	F(v):= \int_0^T e^{-\lambda t} ( f \mid \ddot v )_H \ \d t 
			+\eta (u_1 \mid \dot v(0))_V + \eta (u_0 \mid v(0))_V
\]
We proceed as in the proof of Theorem~\ref{thm:MR_in_V'_damped}. Suppose for a moment that  
\begin{enumerate}
	\item[1)] $E(.,v) \in \mathcal H'$ and $F \in \V'$;
	\item[2)] $E$ is coercive; i.e., there exists a $C>0$ such that
		\(
			\abs{E(v,v)} \ge C \norm{v}_{\mathcal H}^2
		\)
		for all $v \in \V$.
\end{enumerate}
Then by Lions's representation theorem there exists  $u \in \mathcal H$ such that
\begin{equation}\label{eq:riesz_solution_H}
	E(u,v) = F(v)
\end{equation}
for all $v \in \V$.  For arbitrary $w \in V$ and $\psi \in \mathcal D (0,T)$ we take  $v(t) = \int_0^t \int_0^s \psi(r) \,\d r\, \d s\, w$. It follows from 
\eqref{eq:riesz_solution_H}  that 
\[
	\ddot u(t) +\B(t)\dot u(t) + \A(t)u(t) = f(t)
\]
in $L^2(0,T;V')$.
This identity applied to \eqref{eq:riesz_solution_H} implies that
\begin{align*}
	\eta (\dot u(0) \mid \dot v(0))_V + \eta (u(0) \mid v(0))_V = \eta (u_1 \mid \dot v(0))_V + \eta (u_0 \mid v(0))_V
\end{align*}
for all $v \in \V$. Hence $u(0) =u_0$ and $\dot u(0) = u_1$. This means that $ u \in \MR(V,V,H)$ is a solution of 
\eqref{eq:SO_CP_H}. 

It remain to prove properties 1) and 2). Again, 1) is obvious and we focus on 2).  Let $v \in \V$.
For $\epsilon \in (0,\alpha)$ set
\begin{align*}
 R &:= 	\eta\norm{\dot v(0)}_V^2 + \eta \norm{ v(0)}_V^2
		+  \tfrac 1 2 e^{-\lambda T} \left[ (\alpha-\epsilon) \norm{\dot v(T)}^2_V + (\lambda \alpha- \tfrac{\dot M^2}{\epsilon}) \norm{v(T)}^2_V \right]\\
&\quad  -\tfrac 1 2 \frb_1(0,\dot v(0),\dot v(0))
		 -\tfrac \lambda 2 \fra_1(0,v(0),v(0))  - \Re \fra_1(0,v(0),\dot v(0)).
\intertext{By the $V$-boundedness of $\fra_1$ and $\frb_1$ we have}
R&\ge \tfrac 1 2 e^{-\lambda T} \left[ (\alpha-\epsilon) \norm{\dot v(T)}^2_V + (\lambda \alpha- \tfrac{\dot M^2}{\epsilon}) \norm{v(T)}^2_V \right] + (\eta - \frac{M}{2}) \norm{\dot v(0)}_V^2\\
	 &\quad + (\eta-  \frac{\lambda M}{2}) \norm{ v(0)}_V^2 - M  \norm{\dot v(0)}_V
	 \norm{ v(0)}_V.\\
\intertext{Young's inequality yields}
R&\ge C_1 \left[ \norm{\dot v(T)}^2_V + \norm{ v(T)}^2_V 	+ \norm{\dot v(0)}^2_V  + \norm{ v(0)}^2_V\right]
\end{align*}
for some $C_1 >0$ provided $\lambda$ and $\eta$ are sufficiently large.
Now
\begin{align*}
	\Re E(v,v)&
	=\int_0^T e^{-\lambda t} \norm{\ddot v}^2_H \ \d t
			+ \int_0^T e^{-\lambda t} \Re \frb_1(t,\dot v,\ddot v) \ \d t\\
			&\quad + \int_0^T e^{-\lambda t} \Re \frb_2(t,\dot v,\ddot v) \ \d t
			 +  \int_0^T e^{-\lambda t} \Re\fra_1(t, v,\ddot v) \ \d t\\
			& \quad +  \int_0^T e^{-\lambda t} \Re\fra_2(t, v,\ddot v) \ \d t
			 +\eta (\dot v(0) \mid \dot v(0))_V + \eta (v(0) \mid v(0))_V.
	\intertext{We apply assertion  (iii)  of Lemma~\ref{lem:estimates_H}, it follows that} 
	\Re E(v,v) 
	&\ge \int_0^T e^{-\lambda t} \norm{\ddot v}^2_H \ \d t + \int_0^T e^{-\lambda t} \Re \frb_2(t,\dot v,\ddot v) \ \d t\\
	& \quad + \int_0^T e^{-\lambda t} \Re\fra_2(t, v,\ddot v) \ \d t\\
	& \quad+\frac{1}{2}(\alpha \lambda - 2 \dot M - 2 M) \int_0^T e^{-\lambda t} \norm{\dot v}_V^2\ \d t\\
	& \quad  +  \frac 1 2 (\lambda(\alpha \lambda - \dot M) - {\dot M}) \int_0^T e^{-\lambda t} \norm{v}_V^2\ \d t+ R.
	\end{align*}
	Thus $V$-boundedness of $\fra_2$ and $\frb_2$ and Young's inequality yield
	$$\Re E(v,v) 
	 \ge C \norm{v}_{\mathcal H}^2,$$
for some $C >0$ provided that $\lambda$ and $\eta$ are sufficiently large. This proves 2).

Finally, we have seen that the unique solution $u$ satisfies $\dot u (T) \in V$ but we may replace in the previous arguments $[0,T]$ by 
$[0,t]$ for any fixed $t \in (0,T)$ and obtain $\dot u(t) \in V$. \\

\noindent{\it 2- Piecewise Lipschitz-continuous forms}. Suppose now that the forms $\fra_1$ and $\frb_1$ satisfy assumption d). We may replace in the first step the interval $[0,T]$ by $[\tau_{i-1}, \tau_i]$. There exists  a solution $u^i \in H^1(\tau_{i-1}, \tau_i;V)\cap H^2(\tau_{i-1}, \tau_i;H)$ 
of the equation 
$$\ddot v (t) + \B(t) \dot u(t)  + \A(t)u(t) = f(t) \ \text{a.e.\ } t \in [\tau_{i-1}, \tau_i],$$
with prescribed $u^i (\tau_{i-1}), \dot u^{i}(\tau_{i-1})$ in $V$. We also know from the previous step that 
$u^i (\tau_{i}), \dot u^{i}(\tau_{i}) \in V$. Now we can solve the previous  equation on $[\tau_{i}, \tau_{i+1}]$ and obtain a solution $u^{i+1}$ such that 
 $u^{i+1}(\tau_i) = u^i (\tau_{i})$ and $\dot u^{i+1}(\tau_i) = \dot u^i (\tau_{i})$. We define $u$ on $[0,T]$ by  $u = u^{i}$ on $[\tau_{i-1}, \tau_i]$. It is easy to check that 
 $u \in  \MR(V,V,H)$ and $u $ is a solution to \eqref{eq:SO_CP_H}. 
 This finishes the proof of the theorem. 
\end{proof}


\section{The Wave Equation}

Let $H, V$ be Hilbert spaces such that $V \stackrel d \hookrightarrow H$.
Suppose $\fra \colon [0,T] \times V \times V \to \C$ is a Lipschitz-continuous, symmetric, V-bounded and quasi-coercive non-autonomous form.
We denote again by $\A(t)$ the operator associated with $\fra(t)$ on $V'$ and by $A(t)$ the part of $\A(t)$ in $H$.

We introduce the maximal regularity space 
\[
	\MR(V,H,V') := L^2(0,T;V) \cap H^1(0,T;H) \cap H^2(0,T;V')
\]
for the second order Cauchy problem. We have the following result. 
\begin{theorem}\label{thm:MR_in_V'}
	There exists a unique solution 
	$u \in \MR(V,H,V')$ of the 
	non-autonomous second order Cauchy problem
	\begin{equation}\label{eq:SO_CP}
		\left\{
		\begin{aligned}
			&\ddot u(t)  + \A(t)u(t) = f(t) \quad t\text{-a.e.}\\
				&u(0)=u_0, \dot u(0) = u_1
		\end{aligned}
		\right.
	\end{equation}
	for every $u_0 \in V$, $u_1 \in H$ and $f \in L^2(0,T;H)$.
	Moreover $u(t) \in V$ for all $t \in [0,T]$.  
\end{theorem}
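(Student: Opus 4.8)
The plan is to obtain the solution as the limit, as $\varepsilon\downarrow 0$, of solutions of auxiliary \emph{damped} problems furnished by Theorem~\ref{thm:MR_in_V'_damped}. Fix the autonomous form $\frb_0(u,v):=(u\mid v)_V$; it is symmetric, coercive and $V$-bounded, so $\frb_\varepsilon:=\varepsilon\,\frb_0$ is $V$-bounded and quasi-coercive for every $\varepsilon>0$. Theorem~\ref{thm:MR_in_V'_damped} applied to $\fra$ and $\frb_\varepsilon$ (and to $f\in L^2(0,T;H)\subset L^2(0,T;V')$) yields a unique $u_\varepsilon\in\MR(V,V,V')\hookrightarrow C([0,T];V)\cap C^1([0,T];H)$ with $u_\varepsilon(0)=u_0$, $\dot u_\varepsilon(0)=u_1$ and $\ddot u_\varepsilon+\varepsilon\B_0\dot u_\varepsilon+\A(t)u_\varepsilon=f$ in $V'$. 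I will not use the ($\varepsilon$-dependent) constant of Theorem~\ref{thm:MR_in_V'_damped}; instead I derive $\varepsilon$-uniform bounds from the wave energy
\[
	\mathcal E_\varepsilon(t):=\tfrac12\norm{\dot u_\varepsilon(t)}_H^2+\tfrac12\fra(t,u_\varepsilon(t),u_\varepsilon(t))+\tfrac\omega2\norm{u_\varepsilon(t)}_H^2\quad\big(\ge\tfrac12\norm{\dot u_\varepsilon(t)}_H^2+\tfrac\alpha2\norm{u_\varepsilon(t)}_V^2\ge0\big).
\]

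Since $\fra$ is symmetric and Lipschitz, Lemma~\ref{lem:differentiation} and \eqref{eq:derivative_of_norm_in_H} (applied to $\dot u_\varepsilon\in L^2(0,T;V)\cap H^1(0,T;V')$), together with the equation used to rewrite $\Re\langle\ddot u_\varepsilon,\dot u_\varepsilon\rangle$, give $\mathcal E_\varepsilon'=\Re(f\mid\dot u_\varepsilon)_H-\varepsilon\norm{\dot u_\varepsilon}_V^2+\tfrac12\dot\fra(t,u_\varepsilon,u_\varepsilon)+\omega\Re(\dot u_\varepsilon\mid u_\varepsilon)_H$. Dropping $-\varepsilon\norm{\dot u_\varepsilon}_V^2\le0$ and using $V$-boundedness of $\dot\fra$, \eqref{eq:V_dense_in_H} and Young's inequality, one gets $\mathcal E_\varepsilon'\le\tfrac12\norm f_H^2+C\mathcal E_\varepsilon$ with $C=C(\alpha,\dot M,\omega,c_H)$; Grönwall bounds $\sup_t\mathcal E_\varepsilon(t)$, hence $\sup_t(\norm{u_\varepsilon(t)}_V+\norm{\dot u_\varepsilon(t)}_H)$, uniformly in $\varepsilon$, and a further integration of the energy identity bounds $\varepsilon\int_0^T\norm{\dot u_\varepsilon}_V^2$, whence $\norm{\varepsilon\B_0\dot u_\varepsilon}_{L^2(0,T;V')}^2\le\varepsilon^2\int_0^T\norm{\dot u_\varepsilon}_V^2\to 0$. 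Along a subsequence $u_\varepsilon\overset{\ast}{\rightharpoonup} u$ in $L^\infty(0,T;V)$ and $\dot u_\varepsilon\overset{\ast}{\rightharpoonup}\dot u$ in $L^\infty(0,T;H)$; since $\A$ is bounded $L^2(0,T;V)\to L^2(0,T;V')$ and $\varepsilon\B_0\dot u_\varepsilon\to0$ there, passing to the limit in the equation gives $\ddot u+\A(t)u=f$ in $L^2(0,T;V')$, so $u\in\MR(V,H,V')$. The equation also bounds $\ddot u_\varepsilon$ in $L^2(0,T;V')$, hence $u_\varepsilon\rightharpoonup u$ in $H^1(0,T;H)$ and $\dot u_\varepsilon\rightharpoonup\dot u$ in $H^1(0,T;V')$; as evaluation at $0$ is weakly continuous on those spaces we get $u(0)=u_0$ and $\dot u(0)=u_1$. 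Finally, for each fixed $t$ the (continuous-in-$t$) bound $\mathcal E_\varepsilon(t)\le K$ gives $\norm{u_\varepsilon(t)}_V\le R$ for all $t$ and $\varepsilon$, while $u_\varepsilon(t)\rightharpoonup u(t)$ in $H$; weak compactness of the ball $\{\norm{\cdot}_V\le R\}$ then forces $u(t)\in V$ for \emph{every} $t$.

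For uniqueness let $w\in\MR(V,H,V')$ solve $\ddot w+\A(t)w=0$, $w(0)=\dot w(0)=0$. Theorem~\ref{thm:MR_in_V'_damped} does not apply ($\frb\equiv0$ is not quasi-coercive) and the energy identity for $w$ is not directly available since $\dot w$ is only in $L^2(0,T;H)$; instead I would test the equation against the backward primitive $\psi_s(t):=\int_t^s w(\tau)\,\d\tau$ for $t\le s$, $\psi_s\equiv0$ on $(s,T]$, which lies in $H^1(0,T;V)$ with $\psi_s(s)=0$, $\dot\psi_s=-w$. By Lemma~\ref{lemma:int_by_part0}, $\Re\int_0^s\langle\ddot w,\psi_s\rangle=\tfrac12\norm{w(s)}_H^2$; by Lemma~\ref{lem:differentiation} and symmetry of $\fra$, $\Re\int_0^s\fra(t,w,\psi_s)=\tfrac12\fra(0,\psi_s(0),\psi_s(0))+\tfrac12\int_0^s\dot\fra(t,\psi_s,\psi_s)$. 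Writing $Z(t):=\int_0^t w$ so that $\psi_s(t)=Z(s)-Z(t)$, quasi-coercivity of $\fra(0,\cdot,\cdot)$ and $V$-boundedness of $\dot\fra$ lead to
\[
	\tfrac12\norm{w(s)}_H^2+\big(\tfrac\alpha2-\dot M s\big)\norm{Z(s)}_V^2\le C\!\int_0^s\!\big(\norm{w(t)}_H^2+\norm{Z(t)}_V^2\big)\,\d t ,
\]
which on a short interval $[0,s_0]$ (so that $\dot M s_0\le\alpha/4$) is a Grönwall inequality forcing $w\equiv0$ on $[0,s_0]$; iterating over finitely many such intervals gives $w\equiv0$ on $[0,T]$.

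The energy estimate is routine precisely because each $u_\varepsilon$ is regular enough in time for all the product rules to apply. The two delicate points are the uniqueness argument — where the low time-regularity of $\dot w$ rules out the method of Theorem~\ref{thm:MR_in_V'_damped} and forces the dual-test-function device — and the claim $u(t)\in V$ for \emph{every} $t$, obtained not from an embedding but from pointwise-in-$t$ weak compactness of $V$-balls along the approximating sequence. (A direct proof modelled on Theorem~\ref{thm:MR_in_V'_damped}, i.e.\ applying Theorem~\ref{thlions} to a form that tests the equation against $\dot v$, appears to stall: without the $\frb$-term such a form cannot be made coercive simultaneously in $\int_0^T\norm{\dot v}_H^2$ and $\int_0^T\norm{v}_V^2$.)
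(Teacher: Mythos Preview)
Your existence argument via viscous approximation is correct, and it is genuinely different from the paper's proof. The paper does \emph{not} use a vanishing-viscosity limit; it applies Lions' representation theorem (Theorem~\ref{thlions}) directly, with $\mathcal H=\{u\in L^2(0,T;V)\cap H^1(0,T;H):u(0),u(T)\in V\}$, $\V=\{v\in H^2(0,T;V):\dot v(T)=0\}$, and a form $E$ built by testing the equation against $e^{-\lambda t}\dot v$. Coercivity of $E$ comes from Lemma~\ref{lem:estimates}: because $\fra$ is symmetric and Lipschitz, integration by parts turns $\int_0^T e^{-\lambda t}\Re\fra(t,v,\dot v)\,\d t$ into $\tfrac\lambda2\int e^{-\lambda t}\fra(t,v,v)-\tfrac12\int e^{-\lambda t}\dot\fra(t,v,v)+\text{boundary}$, and the factor $\lambda$ then dominates the $\dot\fra$ term. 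Likewise the kinetic term $-\int(\dot v\mid(e^{-\lambda t}\dot v)\dot{}\,)_H$ produces $\tfrac\lambda2\int e^{-\lambda t}\norm{\dot v}_H^2$. So your parenthetical remark that the direct approach ``appears to stall'' is wrong: the exponential weight, together with the symmetry and Lipschitz hypotheses on $\fra$, supplies exactly the missing coercivity. Your approach has the merit of being constructive and of yielding $u(t)\in V$ for every $t$ via pointwise weak compactness; the paper instead reads off $u(T)\in V$ from membership in $\mathcal H$ and then replaces $T$ by an arbitrary $t$. The paper's approach avoids invoking Theorem~\ref{thm:MR_in_V'_damped} and compactness, at the cost of the weighted-form computation.

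Your uniqueness proof is essentially the paper's: the test function $\psi_s(t)=\int_t^s w$ is exactly the paper's $v_r$, and the resulting Grönwall inequality for $\norm{w(s)}_H^2+\norm{Z(s)}_V^2$ matches the paper's estimate for $\norm{u(r)}_H^2+\norm{v_r(0)}_V^2$.
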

Note that by \cite[p.\ 579]{DL88}, for every $u \in \MR(V,H,V')$,  $\dot u $ can be viewed as a continuous function
from $[0,T]$ into the interpolation space $(H,V')_{\frac{1}{2}}$. In particular, $\dot u(0)$ is well defined and $\dot u (0) \in V'$. 
 
We start with  the following lemma.  Here $ \dot\fra(t,.,.)$ denotes  the derivative of $t \mapsto \fra(t,.,.)$.  
\begin{lemma}\label{lem:estimates}
	Let $v \in H^2(0,T;V)$ with $\dot v(T)=0$. Then
	\begin{align*}
		&\text{\rm{(i)} } \int_0^T e^{- \lambda t} \Re ( \ddot v \mid \dot v )_H\ \d t
			= \frac \lambda 2 \int_0^T e^{- \lambda t} \norm{\dot v }_H^2\ \d t 
				-\frac 1 2 \norm{\dot v(0)}^2_H\\
		&\text{\rm{(ii)} } \int_0^T e^{-\lambda t} \Re \fra(t,v,\dot v) \ \d t = \frac \lambda 2 \int_0^T e^{- \lambda t} \fra(t,v,v) \ \d t
	- \frac 1 2 \int_0^T e^{- \lambda t} \dot\fra(t,v,v) \ \d t\\ 
		&\hspace{4.5cm}+\frac 1 2 \fra(T,v(T),v(T))-\frac 1 2 \fra(0,v(0),v(0))
	\end{align*}
\end{lemma}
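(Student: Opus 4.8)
The plan is to derive both identities from a product rule combined with an integration over $[0,T]$, the only real point being to check that the relevant scalar functions of $t$ lie in $W^{1,1}(0,T)$ so that the fundamental theorem of calculus applies.

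For (i): since $v\in H^2(0,T;V)\hookrightarrow H^2(0,T;H)$, we have $\dot v\in H^1(0,T;H)$, and hence (by the classical fact recalled around \eqref{eq:derivative_of_norm_in_H}, applied with the pair $H,H$, or directly) the function $t\mapsto \norm{\dot v(t)}_H^2$ belongs to $W^{1,1}(0,T)$ with $(\norm{\dot v}_H^2)\dot{}=2\Re(\ddot v\mid\dot v)_H$. Since $t\mapsto e^{-\lambda t}$ is Lipschitz on $[0,T]$, multiplying by it preserves $W^{1,1}$-membership and the elementary product rule gives $(e^{-\lambda t}\norm{\dot v}_H^2)\dot{}=-\lambda e^{-\lambda t}\norm{\dot v}_H^2+2e^{-\lambda t}\Re(\ddot v\mid\dot v)_H$. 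Integrating this identity over $[0,T]$, the left-hand side telescopes to $e^{-\lambda T}\norm{\dot v(T)}_H^2-\norm{\dot v(0)}_H^2$; the first term vanishes because $\dot v(T)=0$, and rearranging the resulting equality yields (i).

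For (ii): since $v\in H^2(0,T;V)\subset H^1(0,T;V)$ and $\fra$ is symmetric, Lipschitz, $V$-bounded and quasi-coercive, Lemma \ref{lem:differentiation} applies and gives that $t\mapsto\fra(t,v(t),v(t))$ is in $W^{1,1}(0,T)$ with $\fra(.,v,v)\dot{}=2\Re\fra(.,v,\dot v)+\dot\fra(.,v,v)$. Multiplying again by $e^{-\lambda t}$ and using the product rule, $t\mapsto e^{-\lambda t}\fra(t,v,v)$ lies in $W^{1,1}(0,T)$ with derivative $-\lambda e^{-\lambda t}\fra(t,v,v)+2e^{-\lambda t}\Re\fra(t,v,\dot v)+e^{-\lambda t}\dot\fra(t,v,v)$. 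Integrating over $[0,T]$ and solving the resulting identity for $\int_0^T e^{-\lambda t}\Re\fra(t,v,\dot v)\,\d t$ produces the claimed formula (the boundary contributions coming from the telescoping of the left-hand side).

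I do not expect any substantial obstacle here: both statements are routine once the $W^{1,1}$-membership is established and the two chain/product rules are invoked, namely the rule for $\norm{\cdot}_H^2$ in part (i) and Lemma \ref{lem:differentiation} for $\fra(.,v,v)$ in part (ii). The only mild care needed is to confirm that the regularity $v\in H^2(0,T;V)$ is exactly what is required to apply those rules (for part (ii), one only needs $v\in H^1(0,T;V)$; the extra hypothesis $\dot v(T)=0$ is used solely in part (i)).
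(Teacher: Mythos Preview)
Your proposal is correct and follows essentially the same route as the paper: for (i) you differentiate $e^{-\lambda t}\norm{\dot v}_H^2$ and integrate, using $\dot v(T)=0$ to kill the boundary term at $T$; for (ii) you invoke Lemma~\ref{lem:differentiation} together with the product rule to differentiate $e^{-\lambda t}\fra(t,v,v)$ and integrate. Your write-up is in fact a bit more careful than the paper's in making the $W^{1,1}$-regularity explicit and isolating where the hypothesis $\dot v(T)=0$ enters.
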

\begin{proof}
	For the first part we calculate the formula
	\begin{align*}
		\big(e^{-\lambda t} \norm{\dot v}_H^2\big)\dot{}
		&= -\lambda e^{-\lambda t} \norm{\dot v}_H^2 
			+ 2 e^{-\lambda t} \Re (\ddot v, \dot v)_H.\\
	\intertext{For (ii) we use Lemma~\ref{lem:differentiation} and the product rule to obtain}
		\big(e^{-\lambda t} \fra(t,v,v)\big)\dot{}
		&= - \lambda e^{-\lambda t} \fra(t,v,v) 
			+ 2 e^{-\lambda t} \Re \fra(t,v,\dot v) + e^{-\lambda t} \dot\fra(t,v,v).
	\end{align*}
	Now the Lemma follows by integrating over $t$.
\end{proof}

\begin{proof}[Proof of Theorem \ref{thm:MR_in_V'}]
First we prove existence of a solution. We define the Hilbert space 
$\mathcal H := \{ u \in L^2(0,T;V) \cap H^1(0,T;H) : u(0), u(T) \in V \}$ with norm $\norm u_{\mathcal H}$ such that 
 $\norm u_{\mathcal H}^2 := \norm u^2_{L^2(0,T;V)} + \norm{\dot u}^2_{L^2(0,T;H)} + \norm{u(0)}^2_V+\norm{u(T)}^2_V$ and the pre-Hilbert space
  $\mathcal V := \{ v \in H^2(0,T;V): \dot v(T) = 0\}$ with norm $\norm{.}_{\mathcal V} := \norm{.}_{\mathcal H}$. 
Further we define $E\colon \mathcal H \times \V \to \C$ by
\begin{align*}
	E(u,v) := &-\int_0^T \big(\dot u \,\big\vert\, (e^{-\lambda t} \dot v)\dot{}\, \big)_H \ \d t\\
			&{}+ \int_0^T e^{-\lambda t} \fra(t,u,\dot v) \ \d t + \fra(0, u(0),v(0))
\end{align*}
and for $u_0 \in V$, $u_1 \in H$ and $f \in L^2(0,T;V')$ we define $F\colon \V \to \C$ by
\[
	F(v):= \int_0^T e^{-\lambda t} \langle f, \dot v \rangle \ \d t + \fra(0,u_0,v(0)) + (u_1\mid \dot v(0))_H. 
\]
As in the previous sections, we use Lions's representation Theorem.  
Suppose that the assumptions of Lions's  Theorem are satisfied. Then there exists a $u \in \mathcal H$ such that
\begin{equation}\label{eq:riesz_solution}
	E(u,v) = F(v)
\end{equation}
for all $v \in \V$.  For the particular choice of $v(t) := \psi(t) w$ where $\psi \in \D(0,T)$ and $w \in V$ we obtain from  \eqref{eq:riesz_solution} that 
\begin{align*}
	&\int_0^T \langle \dot u, w \rangle (e^{-\lambda t} \dot \psi(t))\dot{} \ \d t
		= \int_0^T \langle f  - \A u, w \rangle e^{-\lambda t} \dot \psi(t) \ \d t.
\end{align*}
This implies that $\dot u \in H^1(0,T;V')$, hence $u \in \MR(V,H,V')$ and that
\begin{equation}\label{eq:CP_satisfied}
	\ddot u(t)  + \A(t)u(t) = f(t) \quad t\text{-a.e.}
\end{equation}
Following the proof of Lemma 1 and Theorem 2  in \cite[p.\  571 and  575]{DL88} we can integrate by 
   parts in the  first term of $E(u,v)$  to obtain 
\begin{align*}
	E(u,v) &= \langle \dot u(0),   \dot v(0) \rangle+ \int_0^T e^{-\lambda t} \langle \ddot u, \dot v \rangle \ \d t\\
			&\quad+ \int_0^T e^{-\lambda t} \fra(t,u,\dot v) \ \d t  + \fra(0, u(0),v(0))\\
	&{}= \int_0^T e^{-\lambda t} \langle f, \dot v \rangle \ \d t + \fra(0,u_0,v(0)) + (u_1\mid  \dot v(0))_H
\end{align*}
where we used the identity \eqref{eq:riesz_solution}.  This  together with \eqref{eq:CP_satisfied} implies that
\begin{align*}
	\langle \dot u(0),  \dot v(0) \rangle + \fra(0, u(0),v(0)) = \fra(0,u_0,v(0)) + (u_1\mid \dot v(0))_H.
\end{align*}
Since $v \in \V$ was arbitrary this shows that $u(0) =u_0$ and $\dot u(0) = u_1$.

Next we  check the assumptions of Theorem \ref{thlions}. Assumption  1) is again easy to verify.   Let $v \in \V$, then integration by parts yields to
\begin{align*}
	\abs{E(v,v)} \ge \Re E(v,v)
	&= \norm{\dot v(0)}_H^2 + \int_0^T e^{- \lambda t} \Re ( \ddot v \mid \dot v )_H\ \d t\\
	&\quad+ \int_0^T e^{-\lambda t} \Re \fra(t,v,\dot v) \ \d t + \fra(0, v(0),v(0)).
\end{align*}
Thus Lemma~\ref{lem:estimates} applied to the first and second integral and Young's inequality shows that
\begin{align*}
	\Re{E(v,v)}  &\ge \frac 1 2 \norm{\dot v(0)}_H^2 
	+ \frac \lambda 2 \int_0^T e^{- \lambda t} \norm{\dot v }_H^2\ \d t
	+ \frac \lambda 2 \int_0^T e^{- \lambda t} \fra(t,v,v) \ \d t\\
	&\quad - \frac 1 2 \int_0^T e^{- \lambda t} \dot\fra(t,v,v) \ \d t 
	+ \frac 1 2 \fra(T, v(T),v(T))+ \frac 1 2 \fra(0, v(0),v(0))\\
	&\ge C \norm{v}_{\mathcal H}^2
\end{align*}
for some $C>0$ if $\lambda$ is large enough.
Note that we can choose $C$ depending only on the coercivity, $V$-boundedness, Lipschitz constant of the form and on $T$.

Uniqueness:  
Let $u \in \MR(V,H,V')$ be a solution of \eqref{eq:SO_CP} where $f=0$ and $u_0=u_1=0$.
We have to show that $u=0$.
Fix $r \in [0,T]$ and define $v_r(t) := \int_t^T \1_{[0,r]} u(s) \ \d s$. Then $v_r \in H^1(0,T;V)$ with $v_r(r)=0$ and $\dot v_r = -\1_{[0,r]} u$.
We obtain
\begin{align*}
0 
&= 2 \int_0^T \Re \langle \ddot u, v_r \rangle \ \d t + 2 \int_0^T \Re \fra(t,u,v_r) \ \d t\\
&= 2 \int_0^r \int_t^r \Re\langle \ddot u(t), u(s) \rangle \ \d s \ \d t - 2 \int_0^r \Re\fra(t,\dot v_r, v_r) \ \d t\\
&= 2 \int_0^r \int_0^s \Re\langle \ddot u(t), u(s) \rangle \ \d t \ \d s - 2 \int_0^r \Re\fra(t,\dot v_r, v_r) \ \d t\\
&= 2 \int_0^r \Re\langle \int_0^s \ddot u(t)  \ \d t , u(s) \rangle \ \d s - \int_0^r (\fra(t, v_r, v_r))\dot{} - \dot \fra(t,v_r,v_r) \ \d t\\
&= 2 \int_0^r \Re\langle \dot u , u \rangle \ \d s + \fra(0, v_r(0), v_r(0)) - \int_0^r \dot \fra(t,v_r,v_r) \ \d t\\
&\ge \norm{u(r)}_H^2  + \alpha \norm{v_r(0)}_V^2 - \dot M \int_0^r \norm{v_r}_V^2 \ \d t.
\end{align*}
We set $w(r):= v_r(0) = \int_{0}^r u(s) \ \d s \in L^2(0,T;V)$. Then $w(r)-w(t) = v_r(t)$ and
$$ \alpha \norm{w(r)}_V^2 \le \dot M \int_0^r \norm{w(r)-w(t)}_V^2 \ \d t 
\le 2r\dot M \norm{w(r)}_V^2 + 2\dot M \int_0^r \norm{w(t)}_V^2 \ \d t.$$
Let $0<r_0 < \tfrac{ \alpha}{2 \dot M}$ and set $C_{r_0} := \alpha-2r_0\dot M>0$, then for every $r \in [0,r_0]$ we have
$$  \norm{w(r)}_V^2 \le 2 \dot M C_{r_0}^{-1} \int_0^r \norm{w(t)}_V^2 \ \d t.$$
We  conclude by Gronwall's lemma  that $w(r) = 0$ for all $r \in [0,r_0]$, hence 
$u=0$ on $[0,r_0]$. Now we may proceed inductively to obtain $u = 0$ on $[0,T]$.
\end{proof}

\begin{remark}
If we add a $V\times H$-bounded perturbation to $\fra$ as in Section 4, we can still prove existence in Theorem~\ref{thm:MR_in_V'}.
But for the uniqueness we have to assume additionally that this perturbation is also $H\times V$-bounded.
\end{remark}
\begin{remark} 
Let $B(t)$ be bounded operators on $H$ with $\| B(t) \|_{{\mathcal L}(H)} \le M_B $ for a.e.\ $t \in [0,T]$. We consider the wave equation 
\begin{equation}\label{eq:SO_CP00}
		\left\{
		\begin{aligned}
			&\ddot u(t) + B(t) \dot u(t) + \A(t)u(t) = f(t) \quad t\text{-a.e.}\\
				&u(0)=u_0, \dot u(0) = u_1&
		\end{aligned}
		\right.
	\end{equation}
Then for $u_0 \in V$, $u_1 \in H$ and $f \in L^2(0,T, V')$ there exists  a solution $u \in \MR(V,H,V')$ to \eqref{eq:SO_CP00}. The proof is the same as above, one has only to change $E(u,v)$ into
\begin{align*}
	E(u,v) := &-\int_0^T \big(\dot u \,\big\vert\, (e^{-\lambda t} \dot v)\dot{}\, \big)_H \ \d t\\
			&{}+ \int_0^T e^{-\lambda t} (B(t) \dot u \mid \dot v)_H \ \d t\\
			&{}+ \int_0^T e^{-\lambda t} \fra(t,u,\dot v) \ \d t + \fra(0, u(0),v(0)).
\end{align*}
The uniqueness of $u$ is  however not clear except if  the map $t \mapsto  B(t)$ is Lipschitz. If this later condition is satisfied one can use similar ideas as in \cite[p.\ 686]{DL88} to  prove uniqueness. The proof for uniqueness in Theorem \ref{thm:MR_in_V'} is similar to that of \cite[p.\ 673]{DL88}. 
\end{remark}

\section{Applications}\label{section:applications}
In this section we give applications of  our results. We consider two problems, one is  linear and the second one is  
 quasi-linear.\\

{\it I) Laplacian with time dependent Robin boundary conditions.}\\
Let $\Omega$ be a  bounded domain of $\R^d$ with Lipschitz boundary $\Gamma$. 
Denote by $\sigma$ be the $(d-1)$-dimensional Hausdorff measure on $\Gamma$.
Let  
\[
	\beta_1, \beta_2\colon [0,T]  \times \Gamma \to \R
\]
be bounded measurable functions which are Lipschitz continuous w.r.t.\ the first variable, i.e.,
\begin{equation}\label{lipbeta}
 	\lvert \beta_i(t,x) - \beta_i(s, x) \rvert \le M \lvert t-s\rvert \quad (i=1,2)
\end{equation}
for some constant $M$ and all $t, s \in [0,T], \ x \in \Gamma$. We consider the symmetric forms
\[
	\fra,\frb\colon [0,T] \times H^1(\Omega) \times H^1(\Omega) \to \R
\]
defined by 
\begin{equation}\label{formbeta}
	\fra(t, u, v) = \int_\Omega \nabla u \nabla v\ \d x + \int_\Gamma \beta_1(t, .) u v\ \d\sigma.
\end{equation}
and
\begin{equation}\label{formbeta2}
	\frb(t, u, v) = \int_\Omega \nabla u \nabla v\ \d x + \int_\Gamma \beta_2(t, .) u v\ \d\sigma.
\end{equation} respectively.
The forms $\fra, \frb$ are $H^1(\Omega)$-bounded and quasi-coercive. 
The first statement follows readily from the continuity 
of the trace operator and the boundedness of $\beta$. 
The second one is a consequence of the inequality 
\begin{equation}\label{trace-comp}
\int_\Gamma \lvert u \rvert^2 \ \d\sigma \le \epsilon \norm u_{H^1(\Omega)}^2 + c_\epsilon \norm u_{L^2(\Omega)}^2,
\end{equation}
which is valid for all $\epsilon > 0$ ($c_\epsilon$ is a constant depending on $\epsilon$). 
Note that $\eqref{trace-comp}$ is a consequence of compactness of the trace as an operator from $H^1(\Omega)$ into $L^2(\Gamma, \d \sigma)$, see \cite[Chap.\ 2 § 6, Theorem 6.2]{Nec67}.

Let $\A(t)$ be the operator associated with $\fra(t,.,.)$ and $\B(t)$ the operator associated with $\frb(t,.,.)$.
Note that the part $A(t)$ in $H:= L^2(\Omega)$ of $\A(t)$ is interpreted as 
 (minus) the Laplacian  with  time dependent Robin boundary conditions
\[
	\partial_\nu v + \beta_1(t,.) v = 0 \text{ on } \Gamma.
\]
Here we use the following weak definition of the normal derivative.
Let $v \in H^1(\Omega)$ such that $\Delta v \in L^2(\Omega)$.
Let $h \in L^2(\Gamma, \d \sigma)$.  Then $\partial_\nu v = h$
by definition if
$\int_\Omega \nabla v \nabla w + \int_\Omega \Delta v w = \int_\Gamma h w \, \d \sigma$ for all $w \in H^1(\Omega)$.
Based on this definition, the domain of $A(t)$ is the set
\[
	D(A(t)) = \{ v \in H^1(\Omega) : \Delta v \in L^2(\Omega),  \partial_\nu v + \beta_1(t) v\vert_\Gamma = 0 \},
\]
and for $v\in D(A(t))$ the operator is given by $A(t)v = - \Delta v$.

Maximal regularity on $H$  for the first order Cauchy problem associated with $A(t)$ was proved in \cite{ADLO}. Here we study the second order problem.
 By Theorem \ref{thm:MR_in_H}, the damped wave equation 
\begin{equation*}
\left\{  \begin{aligned}
         & \ddot u(t)  - \Delta \dot u(t) -  \Delta u(t)  = f(t)\\
                       & u(0)    =u_0,  \quad \dot u (0)  = u_1  \in H^1(\Omega)\\
                     &   \partial_\nu (\dot u (t) + u(t))  + \beta_2(t,.) \dot u(t) + \beta_1(t,.) u(t) = 0   \text{ on } \Gamma
                         \end{aligned} \right.
\end{equation*}
    has a unique solution  $u \in \MR(V,V,H) =  H^2(0,T;L^2(\Omega))\cap H^1(0,T;H^1(\Omega))$ whenever  $f \in L^2(0,T, L^2(\Omega))$. 
  
Indeed,  Theorem \ref{thm:MR_in_H} implies that there exists $u \in \MR(V,V,V')$ with $u(0)=u_0$, $\dot u (0)  = u_1$ and
\begin{align}\label{eq:Robin_Laplace}
	( \ddot u, v )_H + \frb(t,\dot u, v)  + \fra(t, u, v) = ( f, v )_H
\end{align}
for all $v \in V$ and all $t\in[0,T] \setminus N$, where $N$ is a Lebesgue null set.
Let $t\in[0,T] \setminus N$, then for the special choice $v \in \D(\Omega)$ we obtain that \eqref{eq:Robin_Laplace} implies 
$\ddot u(t)  - \Delta \dot u(t) -  \Delta u(t)  = f(t)$.
This together with \eqref{eq:Robin_Laplace} and the above definition of the normal derivative shows
\[
	\partial_\nu (\dot u (t) + u(t))  + \beta_2(t,.) \dot u(t) + \beta_1(t,.) u(t) = 0   \text{ on } \Gamma.
\]
    
{\it II)  A quasi-linear problem.}\\
Let $\Omega$ be a bounded open set of $\R^d$ and let $H$ be the real-valued Hilbert space $L^2(\Omega, \d x)$ and $V$ be a closed subspace of $H^1(\Omega)$ which contains $H_0^1(\Omega)$. If $V \not= H^1_0(\Omega)$ we assume that $\Omega$ is a Lipschitz domain to ensure that the embedding 
of  $V$ in $H$ is compact.  This latter property is always true for  $V=H^1_0(\Omega)$ for any bounded domain $\Omega$.

For $g,h \in L^2(0,T;H)$ we define the forms $\fra_{g,h}, \frb_{g,h}\colon [0,T]\times V\times V \to \R$ by
\[
	\fra_{g,h}(t,u,v) = \sum_{k,j=1}^d \int_\Omega a_{jk}(t,x,g,h) \partial_k u \partial_j v \ \d x
\]
and
\[
	\frb_{g,h}(t,u,v) = \sum_{k,j=1}^d \int_\Omega b_{jk}(t,x,g,h) \partial_k u \partial_j v \ \d x.
\]
We assume that the coefficients $a_{jk}, b_{jk} \colon [0,T] \times \Omega \times \R \times \R \to \R$ are uniformly bounded on $[0,T] \times \Omega \times\R\times\R$ by a constant $M>0$
and satisfy the usual ellipticity condition
\[
	\sum_{k,j=1}^d a_{jk}(t,x,y,z) \xi_k \xi_j \ge \eta |\xi |^2,\quad \sum_{k,j=1}^d b_{jk}(t,x,y,z) \xi_k \xi_j \ge \eta |\xi |^2
\]
for a.e.\ $(t,x) \in [0, T] \times \Omega$ and all $y,z \in \R$, $\xi \in \R^d$.
Here $\eta > 0$ is a constant.
Moreover we assume that $a_{jk}(t,x,.,.), b_{jk}(t,x,.,.)$ are continuous for a.e.\ $(t,x)$. 
We denote by $\A_{g,h}(t)$ and $\B_{g,h}(t)$ the associated operators. 

Given $u_0 \in V$, $u_1 \in H$ and $f \in L^2(0,T; V')$ the second order Cauchy problem
 \begin{equation}\label{gh}
	\left\{
	\begin{aligned}
		&\ddot u(t) +\B_{g, h}(t)\dot u(t) + \A_{g,h}(t)u(t) = f(t) \quad t\text{-a.e.} \\
			& u(0)=u_0, \dot u(0) = u_1
	\end{aligned}
	\right.
\end{equation} 
has a unique solution $u_{g,h} \in \MR(V,V,V')$
by Theorem~\ref{thm:MR_in_V'_damped}.
Moreover, by Proposition \ref{pro-est}  there exists $C >0$ and $0 < T_0 \le T$ depending only on $M$ and $\eta$ such that  the solution 
of \eqref{gh} on $[0,T_0]$ satisfies the estimate 
\begin{equation}\label{eq:MR_estimate_quasi_linear}
		\norm{u_{g,h}}_{\MR_{T_0}(V,V,V')} \le C \Big[ \norm{u_0}_V + \norm{u_1}_H + \norm{f}_{L^2(0,T_0;V')} \Big].
	\end{equation}
Note that $C$ and $T_0$ are independent of $g$ and $h$.
We want  to show that the quasi-linear problem
\begin{equation}\label{eq:quasi_linear_damped}
	\left\{
	\begin{aligned}
		& \ddot u(t) +\B_{u, \dot u}(t)\dot u(t) + \A_{u,\dot u}(t)u(t) = f(t) \quad t\text{-a.e.}\\
			& u(0)=u_0, \dot u(0) = u_1
	\end{aligned}
	\right.
\end{equation}
has a solution $u$ in $\MR(V,V,V')$.
We define the mapping $S\colon H^1(0,T;H) \to H^1(0,T;H)$ by $Sg:=u_{g, \dot g}$.
Note that by \eqref{eq:MR_estimate_quasi_linear} and the fact that $C$ is independent of $g$ and $h$, 
 $\Im(S)$ is a bounded subset of $\MR(V,V,V')$.
Moreover  by Aubin-Lions lemma, $\MR(V,V,V')$ is compactly embedded into $H^1(0,T;H)$.  
Therefore,  if $S$ is continuous then we can apply  Schauder's fixed point theorem to obtain  
 $u \in H^1(0,T;H)$ such that $Su=u$. Thus $u$ is also in $\MR(V,V,V')$
and $u \in \Im(S)$. Hence $u$ is a solution of  \eqref{eq:quasi_linear_damped}.

It remains to  prove that $S$ is continuous.
Let $g_n \to g$ in $H^1(0,T;H)$ and set $u_n := Sg_n$. 
Since a sequence converges to a fixed element $u$ if and only if each subsequence has a subsequence converging to $u$ we may deliberately take subsequences.
Since $L^2(0,T;H)$ is isomorphic to $L^2((0,T)\times \Omega)$ we may assume (after taking a sub-sequence) that $g_n \to g$ and $\dot g_n \to \dot g$ for a.e.\ $(t,x)$. Furthermore since the sequence $u_n$ is bounded in $\MR(V,V,V')$ we may assume (after taking a sub-sequence) that 
$u_n \to u$ in $H^1(0,T;H)$ and $u_n \rightharpoonup u$ in $\MR(V,V,V')$.
Hence $a_{jk}(t,x,g_n,\dot g_n) \to a_{jk}(t,x,g,\dot g)$ and 
$b_{jk}(t,x,g_n,\dot g_n) \to b_{jk}(t,x,g,\dot g)$ for a.e.\ $(t,x)$. 
Now the equality $u_n = Sg_n$ means that 
\begin{align*}
	\langle \ddot u_n,& v \rangle_{L^2(0,T;V'),L^2(0,T;V)} 
	+ \sum_{j,k=1}^d ( \partial_j \dot u_n \mid b_{jk}(t,x,g_n,\dot g_n) \partial_k v )_{L^2(0,T;H)} \\
	&+ \sum_{j,k=1}^d ( \partial_j u_n \mid a_{jk}(t,x,g_n, \dot g_n) \partial_k v )_{L^2(0,T;H)} = \langle f, v \rangle_{L^2(0,T;V'),L^2(0,T;V)} 
\end{align*}
for all $v \in L^2(0,T;V)$ and $u_n(0) = u_0$, $\dot u_n(0) = u_1$.
By the dominated convergence  theorem 
$a_{jk}(t,x,g_n, \dot g_n) \partial_k v \to a_{jk}(t,x,g, \dot g) \partial_k v$ in $L^2(0,T;H)$.
Moreover $u_n \rightharpoonup u$ in $\MR(V,V,V')$ implies that
$\partial_j u_n \rightharpoonup \partial_j u$ and 
$\partial_j \dot u_n \rightharpoonup \partial_j \dot u$ in $L^2(0,T;H)$.
Thus taking the limit for $n \to \infty$ yields
\begin{align*}
	\langle \ddot u,& v \rangle_{L^2(0,T;V'),L^2(0,T;V)} 
	+ \sum_{j,k=1}^d ( \partial_j \dot u \mid b_{jk}(t,x,g,\dot g) \partial_k v )_{L^2(0,T;H)} \\
	&+ \sum_{j,k=1}^d ( \partial_j u \mid a_{jk}(t,x,g, \dot g) \partial_k v )_{L^2(0,T;H)} = \langle f, v \rangle_{L^2(0,T;V'),L^2(0,T;V)} 
\end{align*}
for all $v \in L^2(0,T;V)$ and $u(0) = u_0$, $\dot u(0) = u_1$.
Note that for the initial condition we have used that $\MR(V,V,V') \hookrightarrow C^1([0,T];H) \cap C([0,T];V)$, see \eqref{eq:embedding_in_continuous_functions}. 
This is equivalent to $Sg=u$. Hence $S$ is continuous.


\noindent
 \emph{Dominik Dier}, Institute of Applied Analysis, 
University of Ulm, 89069 Ulm, Germany,\\
 \texttt{dominik.dier@uni-ulm.de}

\quad\\
\noindent
\emph{El Maati Ouhabaz,} Institut de Math\'ematiques (IMB), Univ.\  Bordeaux, 351, cours de la Libération, 33405 Talence cedex, France,\\ 
\texttt{Elmaati.Ouhabaz@math.u-bordeaux1.fr}

\end{document}